\numberwithin{equation}{section}
\colorlet{refkey}{orange!20}
\colorlet{labelkey}{blue!60}
\newtheorem{theorem}{Theorem}[section]
\newtheorem{lemma}[theorem]{Lemma}
\newtheorem{corollary}[theorem]{Corollary}
\theoremstyle{definition}
\newtheorem{construction}[theorem]{Construction}
\newtheorem{question}[theorem]{Question}
\theoremstyle{remark}
\newtheorem{remark}[theorem]{Remark}
\newcommand{\abs}[1]{\left\lvert#1\right\rvert}
\newcommand{\norm}[1]{\left\lVert#1\right\rVert}
\newcommand{\ang}[1]{\left\langle #1 \right\rangle}
\newcommand{\floor}[1]{\left\lfloor #1 \right\rfloor}
\newcommand{\ceil}[1]{\left\lceil #1 \right\rceil}
\newcommand{\paren}[1]{\left( #1 \right)}
\newcommand{\sqb}[1]{\left[ #1 \right]}
\newcommand{\Cay}{\operatorname{Cay}}
\newcommand{\EE}{\mathbb{E}}
\newcommand{\FF}{\mathbb{F}}
\newcommand{\CC}{\mathbb{C}}
\newcommand{\RR}{\mathbb{R}}
\newcommand{\ZZ}{\mathbb{Z}}
\newcommand{\edit}[1]{{\ttfamily\upshape\small\color{red}[#1]}}
\DeclareMathOperator{\Ind}{Ind}
\newcommand{\triv}{\mathrm{triv}}
\DeclareMathOperator{\SL}{SL}
\DeclareMathOperator{\PSL}{PSL}
\title{Graphs with high second eigenvalue multiplicity}
\author[Haiman]{Milan Haiman}
\author[Schildkraut]{Carl Schildkraut}
\author[Zhang]{Shengtong Zhang}
\author[Zhao]{Yufei Zhao}
\thanks{
Zhao was supported by NSF award DMS-1764176, NSF CAREER award DMS-2044606, a Sloan Research
Fellowship, and the MIT Solomon Buchsbaum Fund.
}
\address{Department of Mathematics, Massachusetts Institute of Technology, Cambridge, MA 02139, USA}
\email{\{mhaiman,carlsc,stzh1555,yufeiz\}@mit.edu}
\begin{document}

\begin{abstract}
	Jiang, Tidor, Yao, Zhang, and Zhao recently showed that connected bounded degree graphs have sublinear second eigenvalue multiplicity (always referring to the adjacency matrix).
	This result was a key step in the solution to the problem of equiangular lines with fixed angles.
	It led to the natural question: what is the maximum second eigenvalue multiplicity of a connected bounded degree $n$-vertex graph?
	The best known upper bound is $O(n/\log\log n)$.
	The previously known best known lower bound is on the order of $n^{1/3}$ (for infinitely many $n$), coming from Cayley graphs on $\PSL(2,q)$.
	
	Here we give a construction showing a lower bound of $\sqrt{n/\log_2 n}$. We also construct Cayley graphs with second eigenvalue multiplicity at least $n^{2/5}-1$. 
	
	Earlier techniques show that there are at most $O(n/\log\log n)$ eigenvalues (counting multiplicities) within $O(1/\log n)$ of the second eigenvalue. 
	We give a construction showing this upper bound on approximate second eigenvalue multiplicity is tight up to a constant factor.
	This demonstrates a barrier to earlier techniques for upper bounding eigenvalue multiplicities.
\end{abstract}

\maketitle

\section{Introduction}

\subsection{Second eigenvalue multiplicity}

Jiang, Tidor, Yao, Zhang, and Zhao \cite{JTYZZ1} recently determined the maximum number of lines in high dimensions pairwise forming a fixed given angle. 
As a key ingredient in their solution, they proved a novel result in spectral graph theory: connected bounded degree graphs have sublinear second eigenvalue multiplicity. 

Throughout this paper, we always refer to the \emph{adjacency matrix} of a graph $G$ when we talk about its eigenvalues. We write $\lambda_1(G) \ge \lambda_2(G) \ge \cdots \ge \lambda_{\abs{V(G)}}(G)$ for the eigenvalues of adjacency matrix of $G$ listed in descending order. We call $\lambda_2(G)$ its \emph{second eigenvalue} and the number of times it appears in the list as the \emph{second eigenvalue multiplicity}.

\begin{theorem}[{\cite{JTYZZ1}}]
\label{thm:JYTZZ}
For every $\Delta$, there is a constant $C = C(\Delta)$ so that every connected $n$-vertex graph with maximum degree at most $\Delta$ has second eigenvalue multiplicity at most $C n /\log\log n$.
\end{theorem}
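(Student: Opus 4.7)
The plan is a polynomial trace-method bound comparing $G$ to the infinite $\Delta$-regular tree. Let $A$ be the adjacency matrix, $\lambda := \lambda_2(G)$, $m$ its multiplicity, and $P$ the orthogonal projector onto the $\lambda$-eigenspace. Since $P$ commutes with $A$, for any polynomial $p$ of degree $k$,
\[
  m \, p(\lambda)^2 \;=\; \operatorname{Tr}\!\bigl(p(A)\, P \, p(A)\bigr) \;\le\; \operatorname{Tr}\!\bigl(p(A)^2\bigr) \;=\; \sum_{v \in V(G)} \|p(A)\, e_v\|^2.
\]
The task reduces to choosing $p$ so that the right-hand side, divided by $p(\lambda)^2$, is at most $C n / \log\log n$.

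The local quantity $\|p(A) e_v\|^2$ depends only on the $k$-ball around $v$. When that ball is a tree, it equals $\int p^2 \, d\mu_\Delta$, where $\mu_\Delta$ is the Kesten--McKay spectral measure of the infinite $\Delta$-regular tree, supported on $I_\Delta := [-2\sqrt{\Delta-1},\, 2\sqrt{\Delta-1}]$. In general I would bound this by a walk-counting argument: every closed walk of length $2k$ at $v$ corresponds to a walk in the tree plus possibly some traversed short cycles, and the cycle contributions can be controlled separately by the local structure at $v$.

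For the polynomial, I would take $p$ to be Chebyshev-like of degree $k = \Theta(\log n / \log \Delta)$ concentrated near $\lambda$. If $\lambda \notin I_\Delta$, classical Chebyshev estimates make $p(\lambda)^2 / \int p^2 \, d\mu_\Delta$ grow exponentially in $k$, yielding the much stronger bound $m = n^{1 - \Omega(1)}$. The hard case is $\lambda \in I_\Delta$: here, the Christoffel function of $\mu_\Delta$ limits the ratio to at most $O(k)$ in the interior and $O(\sqrt{k})$ near the edges, so even in the most favorable scenario a polynomial of degree $k \sim \log n$ would naively give $m \lesssim n/\log n$. The weaker bound $O(n/\log\log n)$ should emerge after accounting for the loss from the cycle correction when one optimizes the tradeoff between gain at $\lambda$ and loss at vertices lying on short cycles.

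The main obstacle is handling the cycle correction robustly across all vertices, since a general bounded-degree graph need not be locally tree-like anywhere (think of a union of short cycles). Rather than discarding non-tree vertices, the robust route is to replace the exact identity $\|p(A) e_v\|^2 = \int p^2 \, d\mu_\Delta$ by an inequality $\|p(A) e_v\|^2 \le \int p^2 \, d\mu_v$ for a local spectral measure $\mu_v$ that is bounded in a controlled way by $\mu_\Delta$. Making this uniform comparison work at the scale $k \sim \log n$, especially when $\lambda$ lies near $\pm 2\sqrt{\Delta - 1}$ (where the density of $\mu_\Delta$ vanishes and Chebyshev gains degrade to $\sqrt{k}$), is the heart of the difficulty and is where the unavoidable $\log\log n$ loss should arise.
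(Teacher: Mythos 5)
The paper's own route (recapped in \cref{app:proof}, following \cite[Section~4]{JTYZZ1}) is structurally very different from what you propose, and the difference matters. JTYZZ first \emph{delete} a set $V_0\cup U$ of $O_\Delta(n/\log\log n)$ vertices, where $V_0$ is an $r_1$-net with $r_1=\lfloor c\log\log n\rfloor$; the size of this net is exactly what produces the $n/\log\log n$ in the final bound. On the residual graph $H$ they prove a single trace estimate of the form
\[
\sum_{i}\lambda_i(H)^{2r_2}\le(\lambda_2^{2r_1}-1)^{r_2/r_1}n,\qquad r_2=\lfloor c\log n\rfloor,
\]
which forces $m_H(\lambda_2)\le e^{-\sqrt{\log n}}n=o(n/\log\log n)$, and then Cauchy interlacing transfers this back to $G$ at a cost of $|V_0\cup U|$. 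The net removal is the device that makes the moment bound true \emph{for all} bounded-degree graphs simultaneously; there is no comparison to a reference spectral measure anywhere, and the source of the $\log\log n$ is combinatorial (net size), not analytic (Christoffel-function degradation).

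Your plan replaces this by a direct Christoffel-function bound: choose a Chebyshev-type $p$, write $m\,p(\lambda)^2\le\sum_v\|p(A)e_v\|^2$, and try to control each summand by comparing the local spectral measure $\mu_v$ of $A$ at $e_v$ to the Kesten--McKay measure $\mu_\Delta$. The gap is in that comparison, and it is not a technicality: for a general connected bounded-degree graph there is no inequality of the form $\mu_v\preceq\mu_\Delta$ (even approximately, even for moments of order up to $\Theta(\log n)$). The quantity $\|p(A)e_v\|^2=\int p^2\,d\mu_v$ is an identity, not an inequality you can relax, and $\mu_v$ can put mass well outside $[-2\sqrt{\Delta-1},2\sqrt{\Delta-1}]$ at \emph{every} vertex; your proposed "cycle correction," which you rightly call the heart of the difficulty, has no mechanism behind it. You also can't count on the exponential Chebyshev gain when $\lambda\notin I_\Delta$: that gain is against $\mu_\Delta$, not against $\mu_v$, which is supported wherever $A$ has eigenvalues. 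Finally, note that your argument never uses connectivity, whereas the theorem is false without it and the JTYZZ proof uses it essentially (Perron eigenvector normalization, and the existence of an $r$-net of size $O(n/r)$ requires a spanning tree of a connected graph). In short: the strategy is genuinely different from the paper's, closer in spirit to the local-walk-support method of \cite{MRS21} than to \cite{JTYZZ1}, but as written it has a missing step that cannot be filled by optimizing a polynomial; the idea that closes the gap in \cite{JTYZZ1} is the net deletion plus interlacing, not a refined Christoffel estimate.
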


In spectral graph theory, the spectral gap has been intensely studied due to its connections to graph expansion (see, e.g., the surveys on expander graphs~\cite{HLW06,Lub12}). 
In contrast, much less is known about second eigenvalue multiplicities.
\cref{thm:JYTZZ} was the first such result about general classes of graphs.

In light of \cref{thm:JYTZZ}, the following question was raised in \cite{JTYZZ1}.

\begin{question} \label{q:main}
    For fixed $\Delta$ and large $n$, what is the maximum second eigenvalue multiplicity among connected $n$-vertex graphs with maximum degree at most $\Delta$?
	What about among regular graphs? Cayley graphs?
\end{question}

\begin{remark}
(a) Among expander graphs, i.e., with some constant $c>0$ so that $\abs{N(A)}\ge (1+c)\abs{A}$ for all vertex sets with $\abs{A}\le n/2$, the proof of \cref{thm:JYTZZ} gives a slightly better upper bound of $O_{\Delta, c}(n/\log n)$.

(b) Lee and Makarychev~\cite{LM08} showed that if $G$ is a connected Cayley graph with doubling constant $K = \max_{R > 0} \abs{B(2R)}/\abs{B(R)}$, then its second eigenvalue multiplicity is at most $K^{O(\log K)}$.

(c) Among regular graphs, the upper bound in \cref{thm:JYTZZ} was improved to $n/(\log n)^{1/5-o(1)}$ by
McKenzie, Rasmussen, and Srivastava~\cite{MRS21}. 

(d) As noted in \cite{JTYZZ1}, \cref{thm:JYTZZ} also applies to the $j$-th eigenvalue of $G$ for any fixed $j$. While the second eigenvalue multiplicity result is sufficient for equiangular lines, the $j$-th eigenvalue multiplicity result is needed for a generalization to spherical two-distance sets~\cite{JTYZZ2}. 
\end{remark}

The previously best known example (i.e., lower bound) for all three questions is any connected 4-regular Cayley graph on $\PSL(2,q)$. It has $n = q(q^2-1)/2$ vertices. Since all non-trivial representations of $\PSL(2,q)$ have dimension at least $(q-1)/2$, all eigenvalues of the graph except for the top one must appear with multiplicity at least $(q-1)/2 = \Theta(n^{1/3})$. This gives a lower bound of $cn^{1/3}$ (for infinitely many $n$). One of the goals of this paper is to improve this lower bound using new constructions.

\begin{theorem} \label{thm:cayley-intro}
There exist infinitely many connected $n$-vertex $18$-regular Cayley graphs with second eigenvalue multiplicity at least $n^{2/5}-1$.
\end{theorem}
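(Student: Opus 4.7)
The plan is to construct Cayley graphs on the affine group $\Gamma_p := \FF_p^2 \rtimes \SL(2,\FF_p)$ for primes $p$. Since $n := |\Gamma_p| = p^3(p^2-1) \le p^5$, we have $p^2 - 1 \ge n^{2/5} - 1$, so it suffices to exhibit, for infinitely many $p$, a symmetric generating set $S \subset \Gamma_p$ of size $18$ whose Cayley graph has second eigenvalue multiplicity at least $p^2 - 1$.

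First I would unpack the irreducible representations of $\Gamma_p$ via Clifford theory. Since $\SL(2,\FF_p)$ acts transitively on $\widehat{\FF_p^2} \setminus \{0\}$ with stabilizer a unipotent subgroup $U$ of order $p$, the irreducibles of $\Gamma_p$ fall into two classes: pullbacks from $\SL(2,\FF_p)$, each of dimension at most $p+1$; and $p$ faithful irreducibles of dimension $p^2-1$, one for each character of $U$. It therefore suffices to place the second-largest eigenvalue of $\Cay(\Gamma_p, S)$ in the faithful part.

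Writing $S = S_G \sqcup S_T$ with $S_G \subset \SL(2,\FF_p)$ and $S_T \subset \FF_p^2$, the Fourier decomposition of $L^2(\Gamma_p)$ over the $\FF_p^2$-action diagonalizes the adjacency operator as $A = \bigoplus_{\psi \in \widehat{\FF_p^2}} T_\psi$, where
\[
(T_\psi \phi)(h) = \alpha_\psi(h)\, \phi(h) + \sum_{h_0 \in S_G} \phi(h h_0), \qquad \alpha_\psi(h) := \sum_{t_0 \in S_T} \psi(h t_0).
\]
For $\psi \ne 0$, the operators $T_\psi$ are all unitarily equivalent via left translation (the $\SL(2,\FF_p)$-action on nonzero characters is transitive), so their common top eigenvalue $\mu^\star$ contributes to the spectrum of $A$ with multiplicity at least $p^2 - 1$. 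Meanwhile, the spectrum of $T_0 = |S_T|\,I + \sum_{h_0 \in S_G} R_{h_0}$ supplies the pullback eigenvalues, with top $|S|$ and second-largest $|S_T| + \lambda_2(\Cay(\SL(2,\FF_p), S_G))$.

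The key step, and main obstacle, is to arrange $\mu^\star$ to exceed this pullback second eigenvalue. I would take $S_T$ inside a line $L \subset \FF_p^2$; then $\alpha_\psi(h) = |S_T|$ precisely on the coset $g_0 B$ of the Borel stabilizer $B := \mathrm{Stab}_{\SL(2,\FF_p)}(L)$ satisfying $g_0 L = \ker\psi$. Testing $T_\psi$ on the real test function $\phi = \mathbf{1}_{g_0 B}$, together with self-adjointness of $T_\psi$ (which follows from the symmetry of $S_G$ and $S_T$), yields the Rayleigh lower bound $\mu^\star \ge |S_T| + |S_G \cap B|$. The tension is that maximizing $|S_G \cap B|$ concentrates $S_G$ inside the Borel, which shrinks the spectral gap of $\Cay(\SL(2,\FF_p), S_G)$. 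I would take $|S_G| = 6$ with five generators chosen from $B$ and one Weyl-type element outside, so that $|S_T| = 12$ and the total degree is $18$, and then invoke a uniform-in-$p$ spectral gap theorem (e.g., Bourgain--Gamburd, or a Helfgott-style product theorem for $\SL(2,\FF_p)$) to force $\lambda_2(\Cay(\SL(2,\FF_p), S_G)) < |S_G \cap B| = 5$. This places the second eigenvalue of the full Cayley graph in a faithful representation, yielding the desired multiplicity of at least $p^2 - 1$.
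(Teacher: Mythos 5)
Your overall strategy---building a Cayley graph on $\FF_p^2\rtimes\SL(2,\FF_p)$, Fourier-decomposing over the normal abelian factor, and using transitivity of $\SL(2,\FF_p)$ on nonzero characters to force multiplicity $p^2-1$---is exactly the right idea and closely parallels the paper's (your block $\bigoplus_{\psi\ne 0}T_\psi$ corresponds to the irreducible component $\varphi$ of $\Ind_\Pi^\Gamma(\triv)$). However, you allocate the generators in the reverse proportions from the paper ($6$ in $\SL(2,\FF_p)$ and $12$ in $\FF_p^2$, versus $16$ and $2$), and this creates a gap in the final comparison that cannot be closed as stated.

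You need $\lambda_2\bigl(\Cay(\SL(2,\FF_p),S_G)\bigr)<|S_G\cap B|=5$, but this inequality is \emph{false}, not merely unproved. Functions on $G=\SL(2,\FF_p)$ that are constant on right cosets of $B$ are preserved by the Cayley adjacency operator, so the spectrum of the Schreier graph on $B\backslash G$ (with $p+1$ vertices, $6$-regular) is contained in that of $\Cay(G,S_G)$. In that Schreier graph the vertex $Be$ carries $|S_G\cap B|=5$ self-loops and a single edge to $Bw$; testing with $\delta_{Be}+\epsilon\,\delta_{Bw}$ (projected off the constant) gives a Rayleigh quotient of roughly $\bigl(5+2\epsilon+\epsilon^2 A_{Bw,Bw}\bigr)/(1+\epsilon^2)$, which for small $\epsilon>0$ and large $p$ strictly exceeds $5$ (e.g.\ about $5.15$ at $\epsilon=0.1$). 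Hence $\lambda_2(\Cay(\SL(2,\FF_p),S_G))>5$ and your pullback second eigenvalue $|S_T|+\lambda_2>17$ already exceeds your Rayleigh lower bound $\mu^\star\ge 17$, so the argument gives no conclusion. The phenomenon is structural: the same test vector $\one_{g_0B}$ that lower-bounds $\mu^\star$ by $|S_T|+|S_G\cap B|$ has a counterpart in the $T_0$-sector (a function concentrated on a single $B$-coset) showing $\lambda_2(\Cay(\SL(2,\FF_p),S_G))\ge \lambda_2(\Cay(B,S_G\cap B))\to|S_G\cap B|$, so the two Rayleigh bounds can never separate. Invoking Bourgain--Gamburd is also misplaced: such theorems give a spectral gap bounded away from zero, not one finely calibrated against $|S_G\cap B|$, and in any case they do not apply when five of six generators concentrate in the solvable (non-expanding) Borel subgroup. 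The paper avoids this entirely by placing $16$ Ramanujan-quality generators in $\SL(2,q)$, giving a spectral gap at least $4$, and then treating the two remaining generators $t,t^{-1}$ in $\FF_q^2$ as a norm-$2$ perturbation to which Weyl's inequality applies cleanly.
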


\begin{theorem} \label{thm:bounded-intro}
There exist infinitely many connected $n$-vertex graphs with maximum degree $4$ and second eigenvalue multiplicity at least $\sqrt{n /\log_2 n}$.
\end{theorem}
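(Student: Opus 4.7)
The plan is to chain together $L$ cycle gadgets of length $k$ and identify their common antisymmetric eigenvalue as $\lambda_2$. Fix integers $k, L \ge 3$. Take $L$ disjoint cycles $C_k^{(1)}, \ldots, C_k^{(L)}$, with the vertices of $C_k^{(i)}$ labelled $y_0^{(i)}, y_1^{(i)}, \ldots, y_{k-1}^{(i)}$, and add the chain edges $\{y_0^{(i)}, y_0^{(i+1)}\}$ for $i = 1, \ldots, L - 1$. The graph $G$ has $n = kL$ vertices, is connected, and has maximum degree $4$, attained at the interior root vertices $y_0^{(2)}, \ldots, y_0^{(L-1)}$.

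For each $i$, define $v^{(i)} \in \RR^{V(G)}$ by $v^{(i)}(y_x^{(i)}) = \sin(2\pi x / k)$ and $v^{(i)} \equiv 0$ elsewhere. The identity $\sin(2\pi(x-1)/k) + \sin(2\pi(x+1)/k) = 2\cos(2\pi/k)\sin(2\pi x/k)$ yields the eigenvector equation inside cycle $i$. At the root $y_0^{(i)}$, $v^{(i)}$ vanishes, the cycle neighbors contribute $\sin(2\pi/k) + \sin(-2\pi/k) = 0$, and the chain neighbors $y_0^{(i \pm 1)}$ have $v^{(i)} \equiv 0$; at vertices outside $C_k^{(i)}$ both sides of $A v^{(i)} = \mu v^{(i)}$ vanish. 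Hence $A v^{(i)} = \mu v^{(i)}$ with $\mu = 2\cos(2\pi/k)$. Since the $v^{(i)}$ have disjoint supports they are linearly independent, so $\mu$ is an eigenvalue of $G$ of multiplicity at least $L$.

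The central step---and the main obstacle---is showing $\lambda_2(G) = \mu$, i.e.\ that no eigenvalue of $G$ lies strictly between $\mu$ and $\lambda_1(G)$. Decomposing $\RR^{V(G)}$ under the per-cycle reflections $y_x^{(i)} \leftrightarrow y_{k-x}^{(i)}$ splits the spectrum into symmetric and antisymmetric sectors; the antisymmetric sector has top eigenvalue exactly $\mu$, so it suffices to bound the symmetric sector's second-largest eigenvalue by $\mu$. The symmetric sector is a weighted chain of ``folded cycle'' decorations (weighted paths with $\sqrt 2$-weighted end edges) joined through their roots, whose spectrum we analyze via a transfer-matrix argument along the chain. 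The parameter choice $k = \Theta(L \log_2 L)$, so that $n = \Theta(L^2 \log_2 L)$ and hence $L = \Theta(\sqrt{n / \log_2 n})$, is dictated by making the transfer-matrix estimates yield the required bound on the symmetric-sector Perron band; this quantitative estimate---showing that with this balance of $k$ versus $L$ no symmetric-sector eigenvalue lies in $(\mu, \lambda_1(G))$---is where I expect most of the work to go.
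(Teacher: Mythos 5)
Your construction fails before the ``central step'' is even reached: with $n = kL$ and $L$ large, the antisymmetric eigenvalue $\mu = 2\cos(2\pi/k) < 2$ cannot be $\lambda_2(G)$, because the chain already produces $\Omega(L)$ eigenvalues strictly larger than $2$. Concretely, for $i = 1,\dots,\lfloor L/4\rfloor$ let $G_i$ be the subgraph of $G$ induced on the vertices of cycles $4i-3$ and $4i-2$, so $G_i$ is two $k$-cycles joined by one chain edge. These induced subgraphs are pairwise non-adjacent in $G$, so if $w_i$ is the Perron eigenvector of $G_i$ extended by zero to $V(G)$, then the $w_i$ are orthogonal and satisfy $w_i^T A_G w_j = 0$ for $i\ne j$, while $w_i^T A_G w_i = \lambda_1(G_i)\lVert w_i\rVert^2$ since $G_i$ is induced. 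Each $G_i$ has $2k$ vertices and $2k+1$ edges, so
\[
\lambda_1(G_i) \;\ge\; \text{average degree of } G_i \;=\; 2 + \tfrac1k.
\]
The span of $w_1,\dots,w_{\lfloor L/4\rfloor}$ then witnesses, via Courant--Fischer,
\[
\lambda_{\lfloor L/4\rfloor}(G) \;\ge\; \min_i \lambda_1(G_i) \;\ge\; 2 + \tfrac1k \;>\; 2 \;>\; \mu,
\]
so already for $L\ge 8$ we have $\lambda_2(G) > \mu$. In fact the situation is worse: the degree-$4$ roots push $\lambda_1(G)$ up to roughly $2\sqrt2$, and the chain of root-localized modes forms an order-one-wide band below $\lambda_1$ containing $\Theta(L)$ eigenvalues above $\mu$. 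Since $\lambda_1(G)-\mu$ is bounded below by an absolute constant, no choice of $k$ relative to $L$ (in particular not $k=\Theta(L\log L)$) can close this gap, and the transfer-matrix estimate you defer to cannot succeed.

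The deeper issue is that a chain has no mechanism to keep a large block of the spectrum exactly degenerate once the gadgets are coupled, so the would-be high-multiplicity eigenvalue splits into a band. The paper's construction avoids this by using the Cayley graph on $\Gamma = \FF_q^\times\ltimes\FF_q$: the ``cycle part'' $\Cay(\Gamma,\{s,s^{-1}\})$ is $q$ disjoint $(q-1)$-cycles, and the subdivided $t$-paths connect them through a group-invariant pattern. Representation theory (specifically that $\Ind_\Pi^\Gamma(\triv)$ decomposes into exactly two irreducibles, one of dimension $q-1$) then forces the relevant band to collapse to a single eigenvalue of multiplicity $q-1$, which sits just below $\lambda_1\approx 2\sqrt2$ rather than below $2$, and the Chebyshev analysis of edge subdivision (\cref{lem:pathlen}, \cref{lem:irrep-gap}, \cref{lem:cheby-gap}) rules out stray eigenvalues in between. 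If you want a non-algebraic construction you need some other degeneracy-protecting structure; the sine eigenvectors of disjoint cycles provide degeneracy but not protection, and they sit in the wrong part of the spectrum.
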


Our constructions use large irreducible group representations to guarantee high multiplicities. 
However, not all eigenvalues in our constructed graphs have high multiplicity, unlike the more straightforward $\PSL(2,q)$ Cayley graphs earlier.
We manipulate our graphs to ensure that the desired high multiplicity eigenvalues are indeed the second largest eigenvalues.
It would be very interesting to obtain constructions with second eigenvalue multiplicity above $\sqrt{n}$, which appears to be a barrier for group representation based methods, since every irreducible representation of an order $n$ group has dimension at most $\sqrt{n}$.

There is still a very large gap between the best known upper and lower bounds for \cref{q:main}. 
In particular, it would be interesting to determine whether there is an upper bound of the form $n^{1-c}$ for some constant $c > 0$.
In addition, an improved upper bound in \cref{thm:JYTZZ} would imply quantitative improvements on the equiangular lines result in \cite{JTYZZ1}.
Indeed, the main theorem in \cite{JTYZZ1} for equiangular lines applies to sufficiently large dimensions, and we would be able to lower the ``sufficiently large'' requirement if we could improve \cref{thm:JYTZZ}.

\subsection{Approximate second eigenvalue multiplicity}

It was observed in \cite{MRS21} that existing proof techniques also give an upper bound on approximate second eigenvalue multiplicities. 
Given $I \subset \RR$ (usually an interval), we write $m_G(I)$ for the number of eigenvalues of $G$ (counting multiplicities) that lie in $I$. 
In particular, we write
\[
m_G[a,b] := \#\{ j : \lambda_j(G) \in [a,b]\}.
\]
The proof in \cite{JTYZZ1} can be easily altered to show the following (see \cref{app:proof} for a brief explanation).

\begin{theorem}[{\cite{JTYZZ1}}]\label{thm:JYTZZ-approx}
For every $\Delta$ and any $K > 0$, there is a constant $C = C(\Delta, K)$ so that every connected $n$-vertex graph $G$ with maximum degree at most $\Delta$ satisfies
\[
m_G \sqb{\paren{1 - \frac{K}{\log n}} \lambda_2(G), \lambda_2(G)} \le \frac{C n}{\log\log n}.
\]
\end{theorem}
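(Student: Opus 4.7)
The proof of \cref{thm:JYTZZ} in \cite{JTYZZ1} relies on a polynomial $p$ of degree $t \asymp \log n$ with $p(\lambda_2)$ large relative to its uniform norm on the tree-spectrum $[-2\sqrt{\Delta - 1},\ 2\sqrt{\Delta - 1}]$, which governs the walk-counting upper bound via the locally tree-like structure of $G$. The (essentially) optimal such $p$ is a shifted and rescaled Chebyshev polynomial of degree $t$, and as such it varies on the natural scale $1/t = \Theta(1/\log n)$ near its peak. In particular, it is within a multiplicative constant of its peak value on an entire interval of length $\Theta(1/\log n)$ around $\lambda_2$, so the unaltered argument in \cite{JTYZZ1} already yields \cref{thm:JYTZZ-approx} for all sufficiently small $K > 0$.

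For general $K > 0$, I would only slightly modify the polynomial. Writing it as $p(x) = T_t((x - a)/b)$, I would shift $a$ to the left by $\Theta(K/\log n)$, so that the entire interval $I := [(1 - K/\log n)\lambda_2,\ \lambda_2]$ lies to the right of the Chebyshev oscillation window $[a - b,\, a + b]$. Using
\[
T_t(1 + u) \;=\; \cosh\!\paren{t\operatorname{arccosh}(1 + u)} \;=\; \exp\!\paren{\Theta\!\paren{t\sqrt{u}}} \qquad \text{for small } u > 0,
\]
and noting that in the relevant regime $t = \Theta(\log n)$ and $u = \Theta(1/\log n)$ one has $t\sqrt{u} = \Theta(1)$, this shift degrades the peak amplitude of $p$ on $I$ by at most a multiplicative constant depending on $K$. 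Meanwhile $\|p\|_\infty$ on the tree-spectrum (and on $[-\lambda_1, \lambda_1]$) is essentially unaffected.

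Having made this change, the rest of the argument in \cite{JTYZZ1} applies with no further modification. Setting $L := \min_{x \in I} p(x)$, the spectral decomposition gives
\[
\Tr{p(A_G)^2} \;=\; \sum_j p(\lambda_j)^2 \;\geq\; L^2 \cdot m_G\sqb{\paren{1 - \tfrac{K}{\log n}} \lambda_2,\ \lambda_2},
\]
replacing the lower bound $p(\lambda_2)^2 \cdot (\text{multiplicity of } \lambda_2)$ used in \cite{JTYZZ1}. The walk-counting upper bound on $\Tr{p(A_G)^2}$ there depends on $p$ only through its degree and its sup-norm on $[-\lambda_1, \lambda_1]$, both of which are preserved by the modification. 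Combining the two inequalities yields $m_G[(1 - K/\log n)\lambda_2,\ \lambda_2] \le C(\Delta, K) \cdot n/\log\log n$, as desired.

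The only substantive item that needs to be checked is the elementary Chebyshev bookkeeping above, namely that shifting the oscillation window by $O(K/\log n)$ degrades the peak value of $p$ on $I$ by at most a $K$-dependent multiplicative constant. This is the sole difference from the original argument and is routine; no new conceptual obstacle arises, which explains why a brief appendix suffices.
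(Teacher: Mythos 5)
Your proposal captures the correct high-level move --- replace the single-point lower bound $p(\lambda_2)^2\cdot m_G(\{\lambda_2\})$ for the trace by the interval lower bound $\min_{x\in I}p(x)^2\cdot m_G(I)$, then reuse the existing upper bound --- but it rests on a misdescription of the method in \cite{JTYZZ1}. The actual argument (summarized in \cref{app:proof}) uses no Chebyshev polynomial and no sup-norm comparison against the tree spectrum. Instead, it removes an $r_1$-net $V_0$ and a small set $U$ from $G$, with $r_1 = \floor{c\log\log n}$, $r_2 = \floor{c\log n}$, and $\abs{V_0\cup U} = O_\Delta(n/\log\log n)$, and proves a closed-walk trace bound $\sum_i \lambda_i(H)^{2r_2}\le(\lambda_2^{2r_1}-1)^{r_2/r_1}n$ for the reduced graph $H$. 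The ``polynomial'' is simply $x^{2r_2}$, and because $(1-K/\log n)^{-2r_2}=e^{O_K(1)}$ is already bounded, no modification to it is needed: one evaluates the same trace bound at $(1-K/\log n)\lambda_2$ instead of at $\lambda_2$, absorbs the constant factor into $C(\Delta,K)$, and concludes by Cauchy interlacing after adding back $V_0\cup U$.

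Your Chebyshev bookkeeping also contains a substantive error that would matter if that were the method. For $t=\Theta(\log n)$ and $u=\Theta(1/\log n)$ one has $t\sqrt{u}=\Theta(\sqrt{\log n})$, not $\Theta(1)$, so a degree-$t$ Chebyshev polynomial varies by a factor $e^{\Theta(\sqrt{\log n})}$ --- not a constant --- over an interval of length $\Theta(1/\log n)$ at the edge of its oscillation window. By the same estimate, shifting the window left by $\Theta(K/\log n)$ in the critical regime where $\lambda_2$ is within $O(1/\log n)$ of $2\sqrt{\Delta-1}$ would inflate the sup-norm of $p$ on the tree spectrum by $e^{\Theta(\sqrt{\log n})}$, contradicting the claim that it is ``essentially unaffected.'' Fortunately the proof of \cref{thm:JYTZZ} never relies on such a polynomial, so the obstacle you were working around does not actually arise --- but the proposal as written is not a correct proof.
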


We give a construction showing that the above bound is in some sense best possible up to a constant factor. In comparison with \cref{thm:JYTZZ-approx}, this construction demonstrates a barrier for using the trace method techniques of \cite{JTYZZ1} to further improve \cref{thm:JYTZZ}.

We define
\[
\textit{the spectral gap of $G$} = \lambda_1(G) - \lambda_2(G).
\]
(Unlike in some contexts, here we do not normalize by the degree. Also, we always consider the adjacency matrix and not the graph Laplacian.)

\begin{theorem} \label{thm:approx-intro}
For any $C, \delta > 0$ there exists some $c > 0$ such that there exist infinitely many connected $n$-vertex graphs $G$ with maximum degree $6$, spectral gap at least $C /\log n$, and
\[
m_G\sqb{\paren{1 - \frac{\delta}{\log n}} \lambda_2(G), \lambda_2(G)}
\ge \frac{c n}{\log\log n}.
\]
\end{theorem}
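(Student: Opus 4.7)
The plan is to construct $G$ as a Cartesian product $G = G_0 \square C_L$ (or $G_0 \square P_L$) of a bounded-degree graph $G_0$ on $m$ vertices and a cycle (or path) of length $L$, with parameters chosen so that $n = mL$ and $\Delta(G) = \Delta(G_0) + 2 = 6$. The guiding principle is that the Cartesian product structure gives $\operatorname{spec}(G) = \set{\lambda + 2\cos(2\pi j/L) \mid \lambda \in \operatorname{spec}(G_0),\ 0 \le j < L}$, so both the spectral gap and the window count can be read off from the combination of $G_0$'s spectrum with the Fourier modes of $C_L$. I would arrange for $G_0$ to be a $4$-regular graph whose spectrum has many eigenvalues concentrated close to $\lambda_1(G_0) = 4$, for instance by using (a variant of) the high-multiplicity construction from \cref{thm:bounded-intro}, or a Cayley graph on a carefully chosen group (such as a direct product $\SL(2,\FF_p) \times \ZZ_r$) whose irreducible representations produce clusters of eigenvalues of moderate multiplicity within a narrow band near the top.

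The computation would proceed in three steps. (1) Identify $\lambda_1(G) = 4 + 2$ and $\lambda_2(G)$ from the product formula, distinguishing the regime in which $\lambda_2(G) = 4 + 2\cos(2\pi/L)$ (gap coming from the cycle) versus $\lambda_2(G) = \lambda_2(G_0) + 2$ (gap coming from the base graph). (2) Choose $L$ so that the relevant gap is $\Theta(1/\log n)$: taking $L = \Theta(\sqrt{\log n})$ makes $2(1 - \cos(2\pi/L)) = \Theta(1/\log n)$, which suffices for the requirement $\lambda_1(G) - \lambda_2(G) \ge C/\log n$ (and in fact gives some slack, which can absorb lower-order perturbations). (3) Count the eigenvalues in $\sqb{(1 - \delta/\log n)\lambda_2, \lambda_2}$ by collecting contributions $\lambda + 2\cos(2\pi j/L)$ from pairs $(\lambda, j)$ with $\lambda$ in a window of width $O(1/\log n)$ below $4$ and $j$ with $\cos(2\pi j/L)$ close to $\cos(2\pi/L)$. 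The goal is to show that these contributions together sum to at least $cn/\log\log n$ for a constant $c = c(C, \delta)$.

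The main obstacle is step (3): even with $L = \Theta(\sqrt{\log n})$, the cycle factor alone only contributes $O(\sqrt{\log n})$ eigenvalues per $G_0$-eigenvalue in the window, so to reach $cn/\log\log n$ one genuinely needs $G_0$ to have roughly $cm/(\sqrt{\log n} \cdot \log\log n)$ eigenvalues (with multiplicity) packed into an interval of width $O(1/\log n)$ just below $\lambda_1(G_0)$. This concentration is substantially beyond the Kesten--McKay bulk density for a generic expander and beyond what the $\sqrt{m/\log m}$ multiplicity bound from \cref{thm:bounded-intro} gives directly, so the base graph $G_0$ must be chosen with care. One natural route is to iterate the product construction (replacing $G_0$ itself by a product of a smaller graph with another short cycle), so that a two-level product $H \square C_{L_1} \square C_{L_2}$ with $L_1, L_2 = \Theta(\sqrt{\log n})$ and $H$ any small connected expander produces a ``band'' of $\Omega(L_1 L_2) = \Omega(\log n)$ Fourier modes in the narrow window; this must be balanced against the constraint that the cumulative degree stays at $6$, which is what forces exactly three factors (e.g.\ two cycle factors plus $H$). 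Combined with a careful choice of $H$ ensuring that the relevant cluster is in fact the top cluster of $G$, one arrives at the target count.
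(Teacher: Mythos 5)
The core difficulty with your plan is structural: a Cartesian product only \emph{translates} spectra, so it cannot compress eigenvalue spacings. Since $\operatorname{spec}(G_0 \square C_L) = \{\lambda + 2\cos(2\pi j/L)\}$ is an additive combination, the density of eigenvalues near the top of $G$ is essentially a convolution of the top-densities of the two factors, and neither factor is doing anything to squeeze a macroscopic band into a microscopic one. Concretely, for $C_L$ the eigenvalues near $2$ behave like $2 - 4\pi^2 j^2/L^2$, so a window of width $w$ just below the top contains only $\Theta(L\sqrt{w})$ cycle modes, while the cycle's spectral gap is $\Theta(1/L^2)$. Demanding spectral gap $\geq C/\log n$ forces $L = O(\sqrt{\log n})$, and then with $w = \Theta(1/\log n)$ you get $\Theta(L\sqrt w)=\Theta(1)$ cycle modes in the window. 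If $G_0$ is a bounded-degree expander (as it must be, to keep $\lambda_2(G)$ controlled and the graph connected with positive spectral gap at the factor level), then $G_0$ contributes $O(1)$ eigenvalues in a $\Theta(1/\log n)$-width window near its own top. The total count is therefore $O(1)$, not $\Omega(n/\log\log n)$. Your proposed fix of iterating to $C_{L_1}\square C_{L_2}\square H$ runs into an impossible parameter regime: the degree-$6$ budget forces $H$ to have degree $2$, hence be a cycle, so you are really looking at a torus $C_{L_1}\square C_{L_2}\square C_{L_3}$; keeping the spectral gap $\Omega(1/\log n)$ then forces every $L_i = O(\sqrt{\log n})$ and hence $n = O((\log n)^{3/2})$, i.e.\ no infinite family at all. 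There is no tuning of $L$'s that simultaneously hits the gap and the count targets.

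What the paper does instead --- and what is genuinely needed --- is a \emph{nonlinear} spectral compression. Starting from a random $3$-regular expander $H$ on $N$ vertices, it builds $G$ by attaching a $K_4$ to each vertex and subdividing each $H$-edge into $4$ paths of length $\ell$. The Chebyshev-polynomial identity of \cref{lem:pathlen} shows that the eigenvalues $\lambda > 2$ of $G$ are precisely the solutions of $f(\lambda) = \mu$ for $\mu \in \operatorname{spec}(H)$, where
\[
f(\lambda) = \left(\lambda - \frac{3U_{\ell-2}(\lambda/2)}{U_{\ell-1}(\lambda/2)} - 3\right) U_{\ell-1}(\lambda/2),
\]
and the crucial point is that $f'$ is of order $\alpha_0^\ell$ on the range of interest. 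Thus the entire interval $[\lambda_2(H), \lambda_1(H)]$, which has constant width and (by Kesten--McKay) contains $\Omega_\epsilon(N)$ eigenvalues of $H$ in its top-$\epsilon$ fraction, is pulled back by $f^{-1}$ to an interval of width $\Theta(\alpha_0^{-\ell})$ in the spectrum of $G$. Choosing $\ell = \Theta(\log\log N)$ makes this width $\Theta(1/\log n)$ and simultaneously produces the spectral gap of that order; the $\Omega(N) = \Omega(n/\log\log n)$ eigenvalues of $H$ below but near $\lambda_2(H)$ land inside the target window. This exponential-in-$\ell$ compression is the key mechanism your proposal is missing; no additive (Cartesian product) construction supplies it.
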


Analogous results were shown by McKenzie, Rasmussen and Srivastava \cite{MRS21} for regular graphs. 
They showed that the approximate second eigenvalue multiplicity of a bounded degree regular graph is at most $n/(\log n)^{1/5-o(1)}$, and they also gave a construction with $\Omega(n/(\log n)^{3/2})$ approximate second eigenvalues.

\subsection*{Organization}
We give the Cayley graph construction for \cref{thm:cayley-intro} in \cref{sec:cayley}, followed by the bounded degree graph construction for \cref{thm:bounded-intro} in \cref{sec:bounded}.
We give the construction and proof for \cref{thm:approx-intro} in \cref{sec:approx1}.

\section{Cayley graph construction} \label{sec:cayley}

In this section we give our construction proving \cref{thm:cayley-intro}, that there are infinitely many connected $n$-vertex 18-regular Cayley graphs with second eigenvalue multiplicity at least $n^{2/5}-1$. 
Here the \emph{Cayley graph} $\Cay(\Gamma,S)$ on a group $\Gamma$ generated by $S = S^{-1}$ has vertex set $\Gamma$ and edges $(g, gs)$ for all $g \in \Gamma$ and $s \in S$. 
By the Lubotzky--Phillips--Sarnak construction of Ramanujan graphs, there are infinitely many prime powers $q$ for which there exists a $8$-regular Cayley graph on $\operatorname{PSL}(2,q)$ with spectral gap at least $8-2\sqrt 7>2$. 
We will lift this graph to a Cayley graph $\Cay(\operatorname{SL}(2,q),S)$ with $|S|=16$ and spectral gap at least $4$. (The precise lifting procedure is described in the proof of \cref{thm:cayley-intro} later in this section.) 
Now, let $\Gamma=\operatorname{SL}(2,q) \ltimes \FF_q^2$, defined via the standard action of $\SL(2,q)$ on $\FF_q^2$, and let $t$ be any element of $\Gamma$ not in $\operatorname{SL}(2,q)$, with $t\neq t^{-1}$. 
The graph we construct will be $\Cay(\Gamma,S\cup\{t,t^{-1}\})$. 
This graph has $q^3(q^2 - 1)$ vertices. 
We will show that its second eigenvalue multiplicity is at least $q^2 - 1$.

\medskip 

Given the adjacency matrix $A$ of a Cayley graph $\Cay(\Gamma, S)$, and a representation $\rho: \Gamma \to \text{GL}_n(\mathbb{C})$ of $\Gamma$, we write
\[
A_\rho = \sum_{s \in S} \rho(s),
\]
which can be viewed as a $\dim \rho \times \dim \rho$ matrix. Then $A = A_{\mathrm{reg}}$, where reg is the regular representation. 
Since the regular representation of $\Gamma$ contains $\dim\rho$ copies of each irreducible representation $\rho$ of $\Gamma$, the matrix $A$ can be block-diagonalized into $\dim\rho$ copies of $A_\rho$ for each irreducible representation $\rho$.

Here is our general recipe for constructing Cayley graphs with large second eigenvalue multiplicity. Define $\Pi\backslash\Gamma/\Pi = \{\Pi g \Pi : g \in \Gamma\}$ as the set of double cosets of $\Pi$.

\begin{theorem} \label{thm:cayley-general}
Let $\Gamma$ be a finite group.
Let $S = S^{-1}$ be a symmetric subset of $\Gamma$.
Let $\Pi$ be the subgroup of $\Gamma$ generated by $S$.
Suppose $\Cay(\Pi, S)$ has spectral gap at least $4$.
Suppose that $\abs{\Pi\backslash\Gamma/\Pi} = 2$.
Let $t$ be an element of $\Gamma$ not in $\Pi$, with $t\neq t^{-1}$.
Then $\Cay(\Gamma, S\cup \{t,t^{-1}\})$ has second eigenvalue multiplicity at least
$\abs{\Gamma}/\abs{\Pi}-1$.
\end{theorem}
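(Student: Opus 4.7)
The plan is to decompose $\ell^2(\Gamma)$ according to the left regular $\Gamma$-representation and isolate an isotypic component that forces $\lambda_2$ to appear with multiplicity at least $|\Gamma|/|\Pi|-1$. Since the adjacency operator $A = \sum_{s\in S} R_s + R_t + R_{t^{-1}}$ lies in the algebra of right translations $R_g$, it commutes with every left translation $L_g$, and hence preserves the isotypic decomposition of $\ell^2(\Gamma)$ under the left regular action. On each $\rho$-isotypic component (of dimension $(\dim\rho)^2$), $A$ acts as $I_{V_\rho}\otimes M_\rho$ for some operator $M_\rho$, so every eigenvalue of $A$ arising on that component automatically carries multiplicity at least $\dim\rho$.

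The first substantive step is to identify the right $\rho$. Consider the subspace $V\subseteq \ell^2(\Gamma)$ of right-$\Pi$-invariant functions, i.e., functions constant on left cosets of $\Pi$; as a left $\Gamma$-representation, $V\cong \Ind_\Pi^\Gamma \triv$. The standard Hecke-algebra isomorphism $\mathrm{End}_\Gamma(V) \cong \CC[\Pi\backslash\Gamma/\Pi]$, combined with the hypothesis $|\Pi\backslash\Gamma/\Pi|=2$, forces $V$ to decompose into exactly two distinct irreducible $\Gamma$-subrepresentations, each of multiplicity one. Frobenius reciprocity ensures one of them is the trivial representation, so $V = \triv\oplus W$ with $W$ irreducible and $\dim W = |\Gamma|/|\Pi|-1$. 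At the same time, because each $s\in S$ lies in $\Pi$, the translation $R_s$ preserves every left coset $h\Pi$, so $A_S := \sum_{s\in S} R_s$ restricts to a copy of $\Cay(\Pi,S)$ on each $\ell^2(h\Pi)$; the spectral gap hypothesis then shows that $V$ is precisely the top eigenspace of $A_S$ (with eigenvalue $|S|$), and every other eigenvalue of $A_S$ is at most $|S|-4$.

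The final step is to pin down $\lambda_2(A)$ via Weyl's inequality. On the $W$-isotypic, $A_S$ acts as $I_W\otimes M_W^S$; since the single copy of $W$ inside $V$ is the entire intersection of the $W$-isotypic with the top eigenspace of $A_S$, $M_W^S$ has $|S|$ as a simple top eigenvalue with all remaining eigenvalues at most $|S|-4$. Since $A_t := R_t + R_{t^{-1}}$ is a sum of two distinct permutation matrices, $\|A_t\|\le 2$, so the top eigenvalue of $M_W := M_W^S + M_W^t$ lies in $[|S|-2,\,|S|+2]$ while the rest are at most $|S|-2$. The trivial isotypic is one-dimensional and contributes the eigenvalue $|S|+2$, which is simple in $A$ by connectedness of $\Cay(\Gamma,S\cup\{t,t^{-1}\})$ (guaranteed by the double-coset hypothesis, since $\Gamma=\Pi\cup\Pi t\Pi$ forces $S\cup\{t\}$ to generate $\Gamma$). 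On every other isotypic, $V$ has no component, so $A_S$ has eigenvalues at most $|S|-4$, and hence $A$ has eigenvalues at most $|S|-2$. Combining these bounds, the top eigenvalue of $M_W$ must equal $\lambda_2(A)$, and it carries multiplicity at least $\dim W = |\Gamma|/|\Pi|-1$ from the $W$-isotypic alone.

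I expect the main obstacle to be the Hecke-algebra step producing $W$: the double-coset hypothesis is exactly what is needed to force $\Ind_\Pi^\Gamma \triv$ to split into the trivial representation plus a single large irreducible. The remaining perturbation comparison is routine once one has the spectral gap of $A_S$ (at least $4$) dominating the cheap bound $\|A_t\|\le 2$; the one subtlety worth checking is that the top eigenvalue of $M_W$ is strictly below $|S|+2$, which follows from connectedness making $\lambda_1(A)=|S|+2$ simple.
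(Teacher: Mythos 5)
Your proof is correct and follows essentially the same route as the paper's: decompose by irreducible representations, use the double-coset hypothesis to show $\Ind_\Pi^\Gamma \triv = \triv \oplus W$ with $W$ irreducible of dimension $|\Gamma|/|\Pi|-1$, and apply Weyl's inequality together with the spectral gap of $\Cay(\Pi,S)$ to isolate the top eigenvalue of the $W$-block as $\lambda_2$. The only cosmetic differences are that you re-derive the double-coset lemma via the Hecke algebra $\mathrm{End}_\Gamma(V)\cong\CC[\Pi\backslash\Gamma/\Pi]$ (the paper instead computes $\langle\chi,\chi\rangle$ by Burnside, mentioning Mackey theory as an alternative), and that you identify $V$ directly as the $|S|$-eigenspace of $A_S$ to determine which isotypics see eigenvalue $|S|$ (the paper argues dually via $\operatorname{Res}_\Pi^\Gamma\rho$ and Frobenius reciprocity).
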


Write $\triv$ for the trivial 1-dimensional representation of a group. Recall that the induction $\Ind_\Pi^\Gamma(\triv)$ of $\triv$ on $\Pi$ to $\Gamma$ is also the permutation representation of $\Gamma$ on left $\Pi$-cosets.

\begin{lemma}
	\label{lem:double-coset-irrep}
	Let $\Gamma$ be a finite group and $\Pi$ a subgroup.
	Suppose $\abs{\Pi\backslash\Gamma/\Pi}=2$.
	Then $\Ind_\Pi^\Gamma(\triv)$ is the direct sum of the trivial representation and an irreducible representation of $\Gamma$ of dimension $\abs{\Gamma}/\abs{\Pi}-1$.
\end{lemma}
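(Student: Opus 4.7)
The plan is to compute the dimension of the endomorphism algebra $\mathrm{End}_\Gamma(\mathrm{Ind}_\Pi^\Gamma(\triv))$ and use the fact that this dimension equals $\sum m_i^2$, where $m_i$ are the multiplicities of the distinct irreducible constituents. If we can show this dimension is exactly $2$, then the only solution is $(m_1, m_2) = (1,1)$, so $\mathrm{Ind}_\Pi^\Gamma(\triv)$ decomposes as a sum of two distinct irreducibles each with multiplicity one. Since Frobenius reciprocity gives
\[
\dim \mathrm{Hom}_\Gamma(\triv, \mathrm{Ind}_\Pi^\Gamma(\triv)) = \dim \mathrm{Hom}_\Pi(\triv, \triv) = 1,
\]
the trivial representation is one of these two constituents, and the other must be an irreducible representation whose dimension is $\dim \mathrm{Ind}_\Pi^\Gamma(\triv) - 1 = |\Gamma|/|\Pi| - 1$.

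To compute $\dim \mathrm{End}_\Gamma(\mathrm{Ind}_\Pi^\Gamma(\triv))$, I would apply Frobenius reciprocity together with Mackey's formula. Specifically,
\[
\mathrm{End}_\Gamma(\mathrm{Ind}_\Pi^\Gamma(\triv)) \cong \mathrm{Hom}_\Pi(\triv, \mathrm{Res}_\Pi^\Gamma \mathrm{Ind}_\Pi^\Gamma(\triv)),
\]
and Mackey's decomposition writes $\mathrm{Res}_\Pi^\Gamma \mathrm{Ind}_\Pi^\Gamma(\triv)$ as a direct sum indexed by double cosets $\Pi g \Pi \in \Pi \backslash \Gamma / \Pi$, each summand being $\mathrm{Ind}_{\Pi \cap g\Pi g^{-1}}^\Pi(\triv)$. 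Applying Frobenius reciprocity once more on each summand gives a contribution of $\dim \mathrm{Hom}_{\Pi \cap g \Pi g^{-1}}(\triv, \triv) = 1$ per double coset, so the total dimension equals $|\Pi \backslash \Gamma / \Pi| = 2$ by hypothesis.

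Alternatively, one can argue more concretely: $\mathrm{Ind}_\Pi^\Gamma(\triv)$ is the permutation representation on left cosets $\Gamma/\Pi$, whose endomorphism algebra is spanned by $\Gamma$-invariant functions on $(\Gamma/\Pi) \times (\Gamma/\Pi)$, and these are in bijection with $\Pi$-orbits on $\Gamma/\Pi$, i.e., with double cosets in $\Pi \backslash \Gamma / \Pi$. Either route yields the count $2$.

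I do not expect any of these steps to be a genuine obstacle — this is a classical calculation. The only care needed is to verify that the two irreducible constituents are genuinely distinct (so that $\sum m_i^2 = 2$ forces $(1,1)$ rather than ruling out a single constituent with multiplicity $\sqrt 2$, which of course cannot happen for integer reasons anyway, so even this is automatic). The conclusion then follows by subtracting the one-dimensional trivial summand from the $[\Gamma:\Pi]$-dimensional induced representation.
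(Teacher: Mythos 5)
Your proof is correct and takes essentially the same route as the paper: both reduce to showing $\langle\chi,\chi\rangle = \dim\operatorname{End}_\Gamma(\Ind_\Pi^\Gamma(\triv)) = \abs{\Pi\backslash\Gamma/\Pi} = 2$, which forces exactly two irreducible constituents each with multiplicity one. The paper's direct proof computes $\langle\chi,\chi\rangle$ via Burnside's lemma on $\Gamma/\Pi\times\Gamma/\Pi$ (while citing Mackey theory as an alternative), which is the same count as your $\Gamma$-invariant-functions / Mackey-decomposition argument in different clothing.
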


The lemma is an immediate consequence of the following lemma, which implies that $\Ind_\Pi^\Gamma(\triv)$ has exactly two irreducible components.

\begin{lemma}\label{lem:char} 
	Let $\Gamma$ be a finite group and $\Pi$ a subgroup.
	The character $\chi$ of $\Ind_\Pi^\Gamma(\triv)$ satisfies
	\[
	\ang{\chi, \chi} = \abs{\Pi\backslash \Gamma / \Pi}.
	\]
\end{lemma}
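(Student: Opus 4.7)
The plan is to prove the identity via the permutation-representation interpretation of induction, combined with Burnside's orbit-counting lemma.

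First I would identify $\Ind_\Pi^\Gamma(\triv)$ with the permutation representation of $\Gamma$ acting on the left coset space $\Gamma/\Pi$. The character of any permutation representation evaluated at $g$ is the number of fixed points, so
\[
\chi(g) = \#\setcond{x\Pi \in \Gamma/\Pi}{x^{-1}gx \in \Pi}.
\]

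Next I would observe that $\chi(g)^2$ equals the number of fixed points of $g$ on the product set $(\Gamma/\Pi) \times (\Gamma/\Pi)$, since this product set carries the tensor product of two copies of the same permutation representation. Applying Burnside's lemma gives
\[
\ang{\chi,\chi} = \frac{1}{\abs{\Gamma}} \sum_{g \in \Gamma} \chi(g)^2 = \#\set{\text{orbits of } \Gamma \text{ on } (\Gamma/\Pi)\times(\Gamma/\Pi)}.
\]

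The only remaining step is to match these orbits with double cosets. Given any pair $(x\Pi, y\Pi)$, the $\Gamma$-action allows us to translate by $y^{-1}$, so every orbit contains a representative of the form $(g\Pi, \Pi)$. Two such representatives $(g\Pi, \Pi)$ and $(g'\Pi, \Pi)$ lie in the same orbit if and only if there exists $h \in \Gamma$ with $h \in \Pi$ and $hg\Pi = g'\Pi$, which translates exactly to the condition $g' \in \Pi g \Pi$. Thus orbits are in bijection with $\Pi \backslash \Gamma / \Pi$, completing the proof. There is no serious obstacle here; the only point requiring care is the clean identification of orbit representatives with double cosets, which is a standard calculation.
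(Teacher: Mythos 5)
Your proof is correct and is essentially the same as the paper's: both identify $\Ind_\Pi^\Gamma(\triv)$ with the permutation action on $\Gamma/\Pi$, apply Burnside's lemma to count orbits on $(\Gamma/\Pi)\times(\Gamma/\Pi)$, and then identify those orbits with double cosets by normalizing the second coordinate to $\Pi$. The only cosmetic difference is that you spell out the final orbit--double-coset bijection slightly more explicitly; the argument is the same.
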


This is a direct consequence of Mackey theory \cite[Sections 7.3--7.4]{Ser77}. In addition, we offer a direct proof.

\begin{proof} 
Since $\Ind_\Pi^\Gamma(\triv)$ is a permutation representation, for each $g \in \Gamma$, $\abs{\chi(g)}^2$ is the number of fixed points of $g$ on $\Gamma/\Pi \times \Gamma/\Pi$ (where $g$ sends $(a\Pi, b\Pi)$ to $(ga\Pi, gb\Pi)$). By Burnside's lemma, $\ang{\chi, \chi} = \EE_{g \in \Gamma} \abs{\chi(g)}^2$ is the number of orbits in $\Gamma/\Pi \times \Gamma/\Pi$ under the action of $\Gamma$. 
Note that the orbit of $(a \Pi, b \Pi)$ contains $(b^{-1} a \Pi, \Pi)$.
Furthermore, $(a \Pi, \Pi)$ and $(b\Pi, \Pi)$ lie in the same orbit if and only if $a \in \Pi b$. It follows that the number of orbits equals to $\abs{\Pi\backslash \Gamma / \Pi}$.
\end{proof}

Recall Weyl's inequality on perturbation of matrix eigenvalues. Denote the eigenvalues of an $n\times n$ real symmetric  matrix $A$ by $\lambda_1(A) \ge \lambda_2(A) \ge \cdots \ge \lambda_n(A)$.

\begin{theorem}[Weyl's inequality]
	If $A$ and $B$ are real symmetric $n\times n$ matrices, then
	\[
	\abs{\lambda_i(A+B) - \lambda_i(A)} \le \norm{B} \qquad \text{for each }i = 1, \dots, n.
	\]
\end{theorem}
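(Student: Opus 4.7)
The plan is to derive Weyl's inequality from the Courant--Fischer min-max characterization of eigenvalues. Recall that for any real symmetric $n\times n$ matrix $M$,
\[
\lambda_i(M) = \max_{\substack{V \subset \RR^n \\ \dim V = i}} \min_{\substack{v \in V \\ \norm{v}=1}} v^T M v,
\]
and the operator norm of the symmetric matrix $B$ satisfies $\norm{B} = \max_{\norm{v}=1}\abs{v^T B v}$.

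First I would observe that for every unit vector $v$, the quadratic forms obey
\[
v^T A v - \norm{B} \le v^T(A+B)v \le v^T A v + \norm{B},
\]
directly from $\abs{v^T B v}\le \norm{B}$. Since $\pm\norm{B}$ is a constant independent of $v$, it commutes with both the inner minimum (over unit vectors in a fixed subspace $V$) and the outer maximum (over $i$-dimensional subspaces). Applying the min-max formula to $A+B$ and $A$ in turn then yields
\[
\lambda_i(A) - \norm{B} \le \lambda_i(A+B) \le \lambda_i(A) + \norm{B},
\]
which rearranges to the stated bound $\abs{\lambda_i(A+B) - \lambda_i(A)} \le \norm{B}$.

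There is no real obstacle here; this is a textbook consequence of min-max and I would simply present it as such. The only point worth noting is that the identity $\norm{B} = \max_{\norm{v}=1}\abs{v^T B v}$ uses symmetry of $B$ (equivalently, the spectral theorem), which is given in the hypothesis. Alternatively, one could prove Weyl's inequality via a dimension-counting argument: the $i$-dimensional top-eigenspace of $A+B$ and the $(n-i+1)$-dimensional span of the bottom eigenvectors of $A$ must intersect nontrivially, and any unit vector $v$ in the intersection satisfies $\lambda_i(A+B) \le v^T(A+B)v \le v^T A v + \norm{B} \le \lambda_i(A) + \norm{B}$; the lower bound is symmetric. Either route is short and standard, so I would pick whichever best matches the paper's style.
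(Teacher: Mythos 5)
The paper does not actually prove Weyl's inequality: it is stated as a recalled classical fact and used as a black box in the proofs of Theorem~\ref{thm:cayley-general} and Lemma~\ref{lem:irrep-gap}. There is therefore no in-paper proof to compare against.

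That said, your argument is correct and is the standard textbook proof. The key observation — that $\abs{v^T B v}\le\norm{B}$ for every unit vector $v$ (which does require $B$ symmetric, as you note), so the quadratic form of $A+B$ is uniformly squeezed between $v^TAv\pm\norm{B}$, and this constant passes through both the inner $\min$ and outer $\max$ in Courant–Fischer — is exactly right and complete. Your alternative dimension-counting route is also valid: a subspace of dimension $i$ (spanned by the top $i$ eigenvectors of $A+B$) and one of dimension $n-i+1$ (spanned by the bottom $n-i+1$ eigenvectors of $A$) must intersect nontrivially in $\RR^n$, and evaluating quadratic forms on a unit vector in the intersection gives the bound; the lower bound follows by interchanging $A$ and $A+B$. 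Either would serve, and either is the kind of short appeal to min–max that a paper like this would silently assume.
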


\begin{proof}[Proof of \cref{thm:cayley-general}]
	Let $A$ and $B$ be the adjacency matrices of $\Cay(\Gamma, S)$ and $\Cay(\Gamma, \{t,t^{-1}\})$.
	
	We first claim that, given an irreducible representation $\rho$ of $\Gamma$, the top eigenvalue of $A_\rho$ is equal to that of $A$ (namely $\abs{S}$) if and only if $\rho$ is contained in $\Ind_\Pi^\Gamma(\triv)$.
	
	We write $\operatorname{Res}_\Pi^\Gamma(\rho)$ for the restricted representation, i.e., the restriction of $\rho \colon \Gamma \to \operatorname{GL}_{\dim \rho}(\CC)$ to $\Pi$.
	Since $S\subset\Pi$, $A_\rho$ can be decomposed in accordance with the decomposition $\operatorname{Res}_\Pi^\Gamma(\rho)$ into irreducible representations of $\Pi$. As the adjacency matrix of $\Cay(\Pi,S)$ can be decomposed in a similar manner, and only the trivial representation gives an eigenvalue of $\abs{S}$, the only $\rho$ for which $A_\rho$ has an eigenvalue of $\abs{S}$ will be those for which $\triv_\Pi$ is contained in $\operatorname{Res}_\Pi^\Gamma(\rho)$. The claim then follows from Frobenius reciprocity. 
	
	Now, the top eigenvalue of $A+B$ is equal to $\abs{S \cup \{t, t^{-1}\}}$, coming from the trivial 1-dimensional representation of $\Gamma$.
	By \cref{lem:double-coset-irrep}, there is exactly one other irreducible representation $\varphi$ of $\Ind_\Pi^\Gamma(\triv)$.
	The top eigenvalue of $A_\varphi$ is $\abs{S}$.
	So by Weyl's inequality, since $\norm{B_\varphi} \le 2$, the top eigenvalue $\mu$ of $A_\varphi + B_\varphi$ is at least $\abs{S} - 2$.
	Applying Weyl's inequality again and using the spectral gap $\lambda_1(\Cay(\Pi,S)) - \lambda_2(\Cay(\Pi,S)) \ge 4$, all eigenvalues of $A+B$ besides $\abs{S \cup \{t, t^{-1}\}}$ and $\mu$ are at most
	$$\lambda_2(\Cay(\Pi,S)) + 2 \le \lambda_1(\Cay(\Pi,S)) - 2 = \abs{S} - 2.$$
	It follows that $\mu$ is indeed the second largest eigenvalue of $A+B$, and it has multiplicity at least $\dim \varphi \ge \abs{\Gamma}/\abs{\Pi} - 1$.
\end{proof}

Now we apply the above construction to groups of the form $\Gamma = \Pi \ltimes V$ where $V$ is some group that admits an action by $\Pi$. Then $\abs{\Pi \backslash \Gamma / \Pi}$ is the number of $\Pi$-orbits in $V$. 
We want to select $S \subset \Pi$ so that $\Cay(\Pi, S)$ has spectral gap at least $4$, and $V$ has exactly two $\Pi$-orbits.

Recall the construction of Ramanujan graphs by Lubotzky, Phillips, and Sarnak~\cite{LPS88} and Margulis~\cite{Mar89}, with later extension by Morgenstern~\cite{Mor94}.

\begin{theorem}[Ramanujan graph] \label{thm:ram}
	For every prime power $p$, there exist infinitely many prime powers $q$ for which there exists a $(p+1)$-regular connected Cayley graph on $\PSL(2,q)$ all of whose eigenvalues, other than the largest, are at most $2\sqrt{p}$ in absolute value.
\end{theorem}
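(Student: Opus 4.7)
The plan is to sketch the classical Lubotzky--Phillips--Sarnak construction (for $p$ prime) together with Morgenstern's function-field extension (for general prime powers $p$), since this is a well-established result and the paper is only quoting it.

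First I would treat the case of an odd prime $p$. Starting from the Lipschitz (or Hurwitz) integer quaternions, consider the set $T_p$ of integer quaternions $\alpha = a_0 + a_1 i + a_2 j + a_3 k$ of reduced norm $\alpha\bar\alpha = p$, normalized by sign and parity conditions on the coefficients so as to select one representative in each coset modulo units. A classical count (ultimately Jacobi's four-square formula) shows $\sabs{T_p} = p+1$, and the set is symmetric under $\alpha \mapsto \bar\alpha$. Now fix a prime $q \neq p$ with $q \equiv 1 \pmod 4$ and $\left(\tfrac{p}{q}\right) = 1$, and fix an identification of the Hamilton quaternion algebra tensored with $\FF_q$ with $M_2(\FF_q)$. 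Reduction mod $q$ sends $T_p$ to a symmetric subset $S \subset \PGL(2,q)$ of size $p+1$, which lands in $\PSL(2,q)$ because $p$ is a quadratic residue mod $q$. The resulting Cayley graph $\Cay(\PSL(2,q), S)$ is $(p+1)$-regular by construction, and connectivity follows from strong approximation for the arithmetic group generated by $T_p$ inside the quaternion algebra.

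The main obstacle, and the true content of the theorem, is the Ramanujan bound on non-trivial eigenvalues. The strategy is to use Eichler's correspondence to identify the adjacency operator of $\Cay(\PSL(2,q), S)$ with the Hecke operator $T_p$ acting on a space of automorphic forms on a suitable definite quaternion algebra. By the Jacquet--Langlands correspondence, these Hecke eigenvalues coincide with Hecke eigenvalues $a_p(f)$ of weight-$2$ holomorphic cusp forms of an appropriate level. Deligne's proof of the Ramanujan--Petersson conjecture for weight-$2$ cusp forms (via the Weil conjectures) then gives $\sabs{a_p(f)} \le 2\sqrt{p}$ for every non-trivial eigenform, which is precisely the Ramanujan bound claimed.

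To pass from odd primes $p$ to arbitrary prime powers, I would invoke Morgenstern's extension: repeat the construction over the global function field $\FF_r(t)$, using quaternion algebras ramified at suitably chosen places, producing $p+1$ canonical generators of norm $p$, and reducing modulo a prime of $\FF_r[t]$ to obtain Cayley graphs on $\PSL(2,q)$ or $\PGL(2,q)$ for infinitely many prime powers $q$. The Ramanujan bound in this function-field setting is supplied by Drinfeld's proof of the Ramanujan--Petersson conjecture for automorphic forms on $\mathrm{GL}_2$ over function fields. The characteristic-$2$ and small prime edge cases are handled separately in Morgenstern's paper. The deepest input, both in the number field and function field settings, remains the Ramanujan--Petersson bound, which I would simply cite rather than reprove.
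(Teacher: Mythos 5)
The paper does not prove this theorem itself; it cites it as a known result of Lubotzky--Phillips--Sarnak~\cite{LPS88}, Margulis~\cite{Mar89}, and Morgenstern~\cite{Mor94}. Your sketch accurately reproduces the standard argument behind those references (quaternion generators of reduced norm $p$, reduction mod $q$ landing in $\PSL(2,q)$ when $p$ is a square mod $q$, connectivity via strong approximation, the Ramanujan bound via Eichler/Jacquet--Langlands plus Deligne, and Drinfeld for Morgenstern's function-field extension to general prime powers), so this is consistent with, and indeed supplies the content behind, the paper's citation.
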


Now we are ready to construct a family of bounded degree $n$-vertex Cayley graphs with second eigenvalue multiplicity at least $n^{2/5}-1$.

\begin{proof}[Proof of \cref{thm:cayley-intro}]
By \cref{thm:ram} applied with $p = 7$, for infinitely many $q$, there exists a $8$-regular connected Cayley graph $H_0=\Cay(\PSL(2,q),S_0)$ with spectral gap at least $8-2\sqrt{7}>2$. Let $S$ be the preimage of $S_0$ under the quotient map from $\Pi=\SL(2,q)$ to $\PSL(2,q)$ so that $|S|=16$.
We claim the spectral gap of $H=\Cay(\SL(2,q),S)$ is at least $4$. $H$ can be constructed from $H_0$ by replacing each vertex $v$ with a pair $(x_v,y_v)$ and adding edges $x_vx_w$, $x_vy_w$, $y_vx_w$, and $y_vy_w$ for each edge $vw$ of $E_0$. As a result, any eigenvector $u$ of $A_{H_0}$ with eigenvalue $\lambda$ gives an eigenvector $u'$ of $A_H$ with $u'_{x_v}=u'_{y_v}=u_v$ and eigenvalue $2\lambda$. In addition, the vector that is $1$ at $x_v$ and $-1$ at $y_v$, and $0$ at all other vertices of $H$, has eigenvalue $0$. So, the eigenvalues of $A_H$ are exactly twice those of $A_{H_0}$ (with appropriate multiplicity), along with $\abs\Pi/2$ copies of $0$. Therefore the spectral gap of $H$ is double that of $H_0$, and is thus at least $4$. (This can also be seen by writing the $H$ as the tensor product of $H_0$ and \tikz[baseline = -0.7ex, scale = .8, every loop/.style={}] {
    \node [name=1,fill=black,circle,minimum size=2.5pt,inner sep=0pt,outer sep=0pt] {} edge [in=-130,out=130,loop] (1);
    \node [name=2,fill=black,circle,minimum size=2.5pt,inner sep=0pt,outer sep=0pt] (2) [right = 0.25cm of 1] {} edge [in=50,out=-50,loop] ();
    \draw[-](1) -- (2);
    }
\!\!.)

Now, using the standard action of $\Pi$ on the additive group $\FF_q^2$, 
set $\Gamma = \Pi \ltimes \FF_q^2$.
Since $\Pi$ is transitive on $\FF_q^2 \setminus\{(0,0)\}$, we have 
$\abs{\Pi \backslash \Gamma / \Pi} = 2$.
Then applying \cref{thm:cayley-general} gives a $18$-regular Cayley graph on $\Gamma$ whose second eigenvalue multiplicity is at least $q^2 - 1 \ge \abs{\Gamma}^{2/5} - 1$.
\end{proof}

\begin{remark} In the proof of \cref{thm:cayley-intro}, we may replace the pair $(\Pi,S)$ with any subgroup $\Pi$ of $\mathrm{GL}(2,q)$ that acts transitively on $\mathbb F_q^2$ and where $\Cay(\Pi,S)$ has spectral gap at least $4$. This gives a graph with the desired property with second eigenvalue multiplicity at least $q^2-1$ on $q^2\abs\Pi$ vertices. In particular, if $\Pi$ can be chosen with $\abs{\Pi}=O(q^{2+c})$ for some $0\leq c<1$, this will improve the second eigenvalue multiplicity bound to $\Omega(n^{2/(4+c)})$.

Furthermore, if $\Cay(\Pi,S)$ has spectral gap at least $\epsilon$ for some constant $\epsilon$ bounded away from $0$ as $q$ grows, we may apply an augmenting strategy to increase the spectral gap to $4$. Select $N=\lceil 4/\epsilon\rceil$, replace $\Pi$ with $\Pi\times \ZZ/N\ZZ$ where the $\ZZ/N\ZZ$ factor acts trivially on $\mathbb F_q^2$, and replace $S$ with $S\times \ZZ/N\ZZ$. Since the eigenvalues of $\Cay(\Pi\times \ZZ/N\ZZ,S\times \ZZ/N\ZZ)$ are exactly those of $\Cay(\Pi,S)$ multiplied by $N$, along with many copies of $0$, this gives a Cayley graph with spectral gap $N\epsilon>4$, at the cost of scaling the degree and the number of vertices by a constant $\Theta(1/\epsilon)$. So, we only require that $\Pi$ be \emph{expanding}, i.e. that it possess Cayley graphs of bounded degree with spectral gap bounded away from $0$.
\end{remark}

\section{Bounded degree graph construction} \label{sec:bounded}

In this section we give our construction proving \cref{thm:bounded-intro} that there are infinitely many connected $n$-vertex graphs with maximum degree 4 and second eigenvalue multiplicity at least $\sqrt{n/\log_2 n}$.
Take a prime power $q$ and set $\Gamma=\FF_q^\times\ltimes\FF_q$, which is isomorphic to the affine group of $\FF_q$. 
Let $s$ be a generator of $\FF_q^\times$.
Let $t$ be any element of $\Gamma\setminus\FF_q^\times$ with $t\neq t^{-1}$. 
Take $\Cay(\Gamma,\{s,s^{-1},t,t^{-1}\})$ and replace every edge generated by $t$ or $t^{-1}$ by a path of $m = \ceil{2 \log_2(q-1)} - 2$ edges. 
This graph has $q(q-1)m$ vertices.
We will show that it has second eigenvalue multiplicity at least $q-1$.

\medskip 

Let us state the construction more generally.

\begin{construction}\label{const:bounded-construction} Let $\Gamma$ be a finite group. Let $S = S^{-1}$ be a symmetric subset of $\Gamma$ with $\abs{S} \ge 2$, and let $\Pi$ be the subgroup of $\Gamma$ generated by $S$.
Suppose that $\abs{\Pi\backslash\Gamma/\Pi} = 2$.
Let $t$ be an element of $\Gamma$ outside $\Pi$ with $t \ne t^{-1}$.
Define $G=G(\Gamma, S, t, m)$ to be the graph obtained from $\Cay(\Gamma, S \cup \{t, t^{-1}\})$ by replacing each edge generated by $t$ or $t^{-1}$ by a path of $m$ edges.
\end{construction}

\begin{theorem}\label{thm:bounded-construction} Let $\Gamma$, $S$, $t$, and $m$ be as in \cref{const:bounded-construction}. Suppose $\Cay(\Pi, S)$ has spectral gap $\kappa$. Let $m\geq 4$ be a positive integer satisfying $\abs{S}^{m-1} \ge 4/\kappa$.

Then $G$ is a connected graph on $\abs{\Gamma}m$ vertices with maximum degree $\abs{S} + 2$ and second eigenvalue multiplicity at least
$\abs{\Gamma}/\abs{\Pi}-1$.
\end{theorem}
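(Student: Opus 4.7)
The plan is to follow the Weyl-inequality strategy of \cref{thm:cayley-general}, but account for the edge subdivision via a transfer-matrix analysis of the path. The basic properties are immediate: $G$ is connected because $S\cup\{t\}$ generates $\Gamma$ (using $\Gamma = \Pi \cup \Pi t \Pi$ from the double-coset condition), it has $|\Gamma|m$ vertices (each of the $|\Gamma|$ undirected $t$-edges contributes $m-1$ new interior vertices), and its maximum degree is $|S|+2$, attained at the original vertices.

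To analyze the spectrum, I would use the action of $\Gamma$ on $G$ by graph automorphisms. Writing $V(G) = V_0 \sqcup V_1 \sqcup \cdots \sqcup V_{m-1}$, with each $V_k$ a copy of $\Gamma$ under left multiplication ($V_0$ being the original vertices and $V_k$ the $k$-th interior vertex of each subdivided $t$-edge), the permutation representation on $\ell^2(V(G))$ is $m$ copies of the regular representation. Hence $A_G$ block-diagonalizes into $\dim\rho$ copies of an $m\dim\rho \times m\dim\rho$ Hermitian matrix $M_\rho$ for each irreducible representation $\rho$ of $\Gamma$, with block structure: $A_\rho = \sum_{s\in S}\rho(s)$ in the $(0,0)$ block, identities on the $(k,k\pm1)$ off-diagonals for $0\le k\le m-2$, and $\rho(t^{-1})$, $\rho(t)$ in the corners $(0, m-1)$ and $(m-1, 0)$. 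The path recurrence $\mu f_k = f_{k-1}+f_{k+1}$ on the interior blocks gives $f_k = q_k(\mu) f_1 - q_{k-1}(\mu) f_0$ in terms of the Chebyshev-type polynomials defined by $q_0 = 0$, $q_1 = 1$, $q_{k+1} = \mu q_k - q_{k-1}$. Imposing the wrap condition and simplifying with the identity $q_{m-1}(\mu)^2 - q_m(\mu) q_{m-2}(\mu) = 1$, the eigenvalue equation $M_\rho f = \mu f$ with $\mu > 2$ (so $q_m(\mu)\neq 0$) reduces to
\[
\eta(\mu)\, f_0 = \paren{A_\rho + \frac{B_\rho}{q_m(\mu)}} f_0, \qquad \eta(\mu) := \mu - \frac{2q_{m-1}(\mu)}{q_m(\mu)}, \qquad B_\rho := \rho(t) + \rho(t^{-1}).
\]
Parametrizing $\mu = 2\cosh\phi$ gives $\eta(\mu) = \sqrt{\mu^2-4}\coth(m\phi)$, which is strictly increasing on $(2,\infty)$, and $q_m(\mu) \ge e^{(m-1)\phi}$.

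The analysis in \cref{thm:cayley-general} (restriction to $\Pi$ and Frobenius reciprocity) shows that $\lambda_1(A_\rho) = |S|$ exactly when $\rho \in \{\triv, \varphi\}$, and $\lambda_1(A_\rho) \le |S| - \kappa$ otherwise. Since $\norm{B_\rho}\le 2$, Weyl's inequality gives $\eta(\mu_\varphi) \ge |S| - 2/q_m(\mu_\varphi)$ for the top eigenvalue $\mu_\varphi$ of $M_\varphi$, and $\eta(\mu_\rho) \le |S| - \kappa + 2/q_m(\mu_\rho)$ for the top $\mu_\rho > 2$ of $M_\rho$ when $\rho\notin\{\triv,\varphi\}$ (the case $\mu_\rho\le 2 < \mu_\varphi$ being trivial). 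By monotonicity of $\eta$, the desired $\mu_\varphi \ge \mu_\rho$ reduces to the estimate $\kappa \ge 2/q_m(\mu_\varphi) + 2/q_m(\mu_\rho)$. The main difficulty is the lower bound on these $q_m$ values: a Rayleigh-quotient test with $f = v\otimes e_0$ (where $v$ is a top eigenvector of $A_\varphi$) gives $\mu_\varphi \ge |S|$, and a refinement using $f_0 = v, f_1 = cv$ sharpens this to $\mu_\varphi \ge (|S|+\sqrt{|S|^2+4})/2$; a parallel bound applies to $\mu_\rho$ in the nontrivial case. Combined with $q_m(\mu) \ge e^{(m-1)\phi}$ and the computation $e^\phi \ge |S|$ at $\mu$ near $\sqrt{|S|^2+4}$, this yields $q_m(\mu_\varphi), q_m(\mu_\rho)$ growing like $|S|^{m-1}$, into which the hypothesis $|S|^{m-1}\ge4/\kappa$ plugs directly to give the required separation. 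Finally, Perron--Frobenius on the connected graph $G$ shows that $\mu_\triv = \lambda_1(G)$ is simple, so $\lambda_2(G) = \mu_\varphi$, with multiplicity at least $\dim\varphi = |\Gamma|/|\Pi| - 1$ by \cref{lem:double-coset-irrep}.
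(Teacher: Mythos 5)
Your structural reduction is correct and essentially mirrors the paper's \cref{lem:pathlen}, just carried out via the transfer--matrix recursion on eigenvectors rather than via row/column operations on the characteristic polynomial: with $q_k(\mu)=U_{k-1}(\mu/2)$, your reduced condition $\eta(\mu)\in\sigma(A_\rho+B_\rho/q_m(\mu))$ is exactly the paper's $\det M_\rho(\mu/2)=0$, since $\eta(\mu)=2T_m(\mu/2)/U_{m-1}(\mu/2)$ and $q_m(\mu)=U_{m-1}(\mu/2)$. The $\eta$-monotonicity/Weyl strategy is also the same idea the paper pursues. However, the quantitative heart of the argument has a genuine gap.

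The issue is the lower bound on $q_m(\mu_\varphi)$. You reduce the needed comparison to $\kappa\ge 4/q_m(\mu_\varphi)$ (after the easy observation that if an offending $\mu_\rho>\mu_\varphi$ existed, then $q_m(\mu_\rho)>q_m(\mu_\varphi)$, so the bottleneck is $q_m(\mu_\varphi)$), and you try to get this from $\mu_\varphi\ge(|S|+\sqrt{|S|^2+4})/2$ via $q_m(\mu)\ge e^{(m-1)\phi}$ together with ``$e^\phi\ge|S|$ at $\mu$ near $\sqrt{|S|^2+4}$.'' But your two-vertex Rayleigh quotient only gives $\mu_\varphi\ge(|S|+\sqrt{|S|^2+4})/2$, which is strictly below $\sqrt{|S|^2+4}$, and at that value $e^\phi<|S|$. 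Concretely, for $|S|=2$ the bound is $\mu_\varphi\ge 1+\sqrt2\approx2.41$, at which $\alpha=e^\phi=(\mu+\sqrt{\mu^2-4})/2\approx1.88<2$; then $\alpha^{m-1}<|S|^{m-1}$, and the chain $q_m(\mu_\varphi)\ge\alpha^{m-1}\ge|S|^{m-1}\ge4/\kappa$ breaks. The correct statement $e^{\phi_\varphi}\ge|S|$ is true, but it cannot be read off from the two-vertex Rayleigh quotient; it is precisely what the paper extracts by defining the threshold $y_0$ via $2T_m(y_0)+2=|S|U_{m-1}(y_0)$, showing $\mu_\varphi\ge 2y_0$ by a Rayleigh-quotient argument that uses the \emph{full} reduced equation at $y_0$ (where the corner term $\bm v^T B_\varphi \bm v\ge -2$ exactly cancels the ``$+2$''), and then invoking the Chebyshev inequality in \cref{lem:cheb-facts}(b) which gives $|S|=(2T_m(y_0)+2)/U_{m-1}(y_0)\le e^{\phi_0}$. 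In other words, your test vector is too short; to see $\mu_\varphi$ is as large as claimed you need to spread weight along all $m$ positions of the path (equivalently, argue via $y_0$ and \cref{lem:cheb-facts}), not just the first two.

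A secondary incompleteness: you dismiss the trivial block with ``Perron--Frobenius shows $\mu_\triv=\lambda_1(G)$ is simple.'' That only controls the top eigenvalue of the $m\times m$ matrix $M_\triv$; you also need that its second-largest eigenvalue is below $\mu_\varphi$, otherwise it could be $\lambda_2(G)$ without high multiplicity. This is easy to patch (e.g., $M_\triv=A_{C_m}+|S|e_0e_0^T$, so by Cauchy interlacing for a rank-one perturbation its second eigenvalue is at most $2<\mu_\varphi$; or use \cref{lem:cheb-facts}(a) as the paper does), but as written the step is missing.
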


Recall the \emph{Chebyshev polynomials of the first kind} $T_m$, defined by the recurrence relation
\begin{equation}\label{eq:T-recur}
T_0(x)=1, \quad
T_1(x)=x, \quad  \text{and} \quad
T_{m+1}(x)=2xT_m(x)-T_{m-1}(x) \text{ for all } m \ge 1.
\end{equation}
They satisfy
\begin{equation}
	\label{eq:T-alpha}
T_m\paren{\frac{\alpha+\alpha^{-1}}2}
=\frac{\alpha^m+\alpha^{-m}}{2}.
\end{equation}
The \emph{Chebyshev polynomials of the second kind} $U_m$ are defined by the recurrence relation
\begin{equation}
\label{eq:U-recur}	
U_0(x)=1, \quad
U_1(x)=2x, \quad
U_{m+1}(x)=2xU_m(x)-U_{m-1}(x) \text{ for all } m \ge 1.
\end{equation}
They satisfy
\begin{equation}
	\label{eq:U-alpha}
U_m\paren{\frac{\alpha+1/\alpha}2}
= \frac{\alpha^{m+1}-\alpha^{-m-1}}{\alpha- \alpha^{-1}}.
\end{equation}
We also have
\begin{equation}\label{eq:T-U}
T_m(x)=xU_{m-1}(x)-U_{m-2}(x).
\end{equation}
In addition, we will need the following lemma.
\begin{lemma}\label{lem:cheb-facts}
\begin{enumerate}
    \item[(a)] The function
    \[z\mapsto \frac{T_m(z)-1}{U_{m-1}(z)}\]
    is increasing for $z\geq 1$.
    \item[(b)] For $m\geq 4$ and $z\geq 1$,
    \[U_{m-1}(z)\geq \left(\frac{2T_m(z)+2}{U_{m-1}(z)}\right)^{m-1}.\]
\end{enumerate}
\end{lemma}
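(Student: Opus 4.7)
For part (a), my plan is to pass to the hyperbolic parameterization $z=\cosh\theta$, $\theta\ge 0$. Using $T_m(\cosh\theta)=\cosh(m\theta)$, $U_{m-1}(\cosh\theta)=\sinh(m\theta)/\sinh\theta$, and the double-angle identities $\cosh(m\theta)-1=2\sinh^2(m\theta/2)$ and $\sinh(m\theta)=2\sinh(m\theta/2)\cosh(m\theta/2)$, the ratio collapses to
\[
\frac{T_m(z)-1}{U_{m-1}(z)} = \sinh(\theta)\,\tanh(m\theta/2).
\]
Both factors are nonnegative and increasing on $\theta\ge 0$, so the product is increasing in $\theta$, and since $\theta$ is an increasing function of $z$ on $[1,\infty)$, this settles (a).

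For part (b), the first step is to clear the denominator to get the equivalent form $U_{m-1}(z)^m\ge(2T_m(z)+2)^{m-1}$. The same hyperbolic substitution applied with $\alpha=e^\theta\ge 1$ reduces the claim, after routine bookkeeping (all factors group as powers of $\alpha$ that cancel cleanly), to the purely algebraic inequality
\[
(\alpha^m-1)^m \ge (\alpha^2-1)^m(\alpha^m+1)^{m-2}.
\]
I then take $m$-th roots and introduce the parameter $s=(\alpha^m-1)/(\alpha^m+1)\in[0,1)$, so that $\alpha^m+1=2/(1-s)$ and $\alpha^2=((1+s)/(1-s))^{2/m}$. Substituting these identities and simplifying (the factor $(1-s)^{2/m}$ cancels throughout), the inequality collapses to the clean form
\[
(1+s)^{r}-(1-s)^{r}\le 2^{r}\,s, \qquad r=2/m\in(0,1/2].
\]

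To close (b), I verify this last inequality by convexity. Letting $g(s)=(1+s)^r-(1-s)^r$, a direct computation gives
\[
g''(s)=r(r-1)\bigl[(1+s)^{r-2}-(1-s)^{r-2}\bigr],
\]
which is strictly positive on $(0,1)$: indeed $r(r-1)<0$ and $(1+s)^{r-2}<(1-s)^{r-2}$ since $x\mapsto x^{r-2}$ is decreasing on $(0,\infty)$. Hence $g$ is convex on $[0,1]$ with $g(0)=0$ and $g(1)=2^r$, so convexity yields $g(s)\le(1-s)g(0)+s\,g(1)=2^r s$, as required.

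The main obstacle I anticipate is not the convexity finish, which is elegant, but rather the bookkeeping in the algebraic reductions of part (b). The target inequality $(1+s)^r-(1-s)^r\le 2^r s$ is tight at both $s=0$ and $s=1$, which matches the fact that the original ratio $U_{m-1}(z)^m/(2T_m(z)+2)^{m-1}\to 1$ as $z\to\infty$, so the chain of reductions must preserve this tightness exactly and leave no slack; any lossy inequality along the way would be fatal.
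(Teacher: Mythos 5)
Your part (a) is correct and essentially identical to the paper's: writing $z=\cosh\theta$ is the same as the paper's $z=(\alpha+\alpha^{-1})/2$ with $\alpha=e^\theta$, and the paper's simplified form $\frac{\alpha-\alpha^{-1}}{2}\cdot\frac{1-\alpha^{-m}}{1+\alpha^{-m}}$ is just your $\sinh\theta\,\tanh(m\theta/2)$ in different clothing. In both cases the conclusion follows from writing the ratio as a product of nonnegative increasing factors.

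Your part (b) is also correct, but it takes a genuinely different route from the paper's, and it is worth comparing. The paper keeps the quantities $U_{m-1}$ and $\frac{2T_m+2}{U_{m-1}}$ intact: using the hypothesis $m\geq 4$ it shows
\[
\frac{2T_m(z)+2}{U_{m-1}(z)}=(\alpha-\alpha^{-1})\frac{1+\alpha^{-m}}{1-\alpha^{-m}}\leq (\alpha-\alpha^{-1})\frac{1+\alpha^{-4}}{1-\alpha^{-4}}=\frac{\alpha^4+1}{\alpha^3+\alpha}\leq\alpha,
\]
and then combines this with the elementary bound $U_{m-1}(z)\geq\alpha^{m-1}$ to conclude in two lines. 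Your approach instead clears denominators to $(\alpha^m-1)^m\geq(\alpha^2-1)^m(\alpha^m+1)^{m-2}$, substitutes $s=(\alpha^m-1)/(\alpha^m+1)$ and $r=2/m$, and lands on the clean inequality $(1+s)^r-(1-s)^r\leq 2^r s$, which you finish by verifying convexity of $g(s)=(1+s)^r-(1-s)^r$ on $[0,1]$ (the sign computation $g''=r(r-1)\bigl[(1+s)^{r-2}-(1-s)^{r-2}\bigr]>0$ is correct: both factors are negative for $0<r<1$). I checked the bookkeeping in your reduction and the powers of $(1-s)$ do cancel cleanly as claimed, and the $m$-th-root step is legitimate since both sides are nonnegative for $\alpha\geq 1$. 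The paper's proof is shorter and reuses the same display as part (a); your proof is more algebraically involved but yields a pleasantly self-contained scalar inequality, and as a bonus it actually works for any $m\geq 3$ (only $0<r<1$ is needed), so the $m\geq 4$ hypothesis is not essential to your argument, whereas the paper's proof uses $m\geq 4$ explicitly in the estimate $\alpha^{-m}\leq\alpha^{-4}$. One small point worth flagging: the clearing step divides by $(\alpha^2-1)^m$, which vanishes at $z=1$; this is harmless since the reduced inequality still holds (with equality $0\geq 0$) at $\alpha=1$, but a sentence noting that $z=1$ is covered directly or by continuity would make the writeup airtight.
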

\begin{proof} We parameterize $z=(\alpha+\alpha^{-1})/2$ with $\alpha \ge 1$. By \cref{eq:T-alpha,eq:U-alpha}, 
\begin{align*}
\frac{T_m\paren{\frac{\alpha + \alpha^{-1}}{2}}\pm 1}{U_{m-1}\paren{\frac{\alpha + \alpha^{-1}}{2}}}
&= \frac{(\alpha - \alpha^{-1})(\alpha^m + \alpha^{-m} \pm 2)}{2(\alpha^m - \alpha^{-m})}
= \frac{(\alpha - \alpha^{-1})
(\alpha^{m/2} \pm \alpha^{-m/2})^2}{2(\alpha^m - \alpha^{-m})}
\\
&= \frac{(\alpha - \alpha^{-1})(\alpha^{m/2} \pm \alpha^{-m/2})}{2(\alpha^{m/2} \mp \alpha^{-m/2})}
= \frac{\alpha-\alpha^{-1}}{2} \cdot \frac{1\pm \alpha^{-m}}{1\mp \alpha^{-m}}.
\end{align*}
\begin{enumerate}
\item[(a)] In the case of $T_m(z)-1$, the function is increasing in $\alpha \ge 1$ (and hence increasing in $z = (\alpha + \alpha^{-1})/2 \ge 1$) since it is a product of three increasing functions.
\item[(b)] Using $m\geq 4$,
\[
\frac{2T_m(z)+2}{U_{m-1}(z)}
=(\alpha-\alpha^{-1})\frac{1+\alpha^{-m}}{1-\alpha^{-m}}
\leq (\alpha-\alpha^{-1})\frac{1+\alpha^{-4}}{1-\alpha^{-4}}
=\frac{\alpha^4+1}{\alpha^3+\alpha}\leq \alpha.
\]
The result follows from using \cref{eq:U-alpha} to derive
$$\alpha^{m-1}\leq \frac{\alpha^m-\alpha^{-m}}{\alpha-\alpha^{-1}}=U_{m-1}(z).\qedhere$$
\end{enumerate}

\end{proof}

The following identity tell us how to analyze the eigenvalues of a graph after subdividing the edges of a regular subgraph.

\begin{lemma}\label{lem:pathlen} Let $H_1$ and $H_2$ be graphs on the same vertex set $V$.
Suppose $H_2$ is $d$-regular.
Let $G$ be obtained by first overlaying $H_1$ and $H_2$ (initially viewing the union as a multigraph) and then replacing each edge of $H_2$ with a path of $m$ edges. 
Then
\[
\det(A_G- 2x I)=\pm U_{m-1}(x)^{e(H_2)}\det\left(A_{H_1}-\left(2x-\frac{dU_{m-2}(x)}{U_{m-1}(x)}\right)I+\frac{1}{U_{m-1}(x)}A_{H_2}\right).
\]
\end{lemma}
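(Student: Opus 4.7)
The plan is to block-decompose $A_G - 2xI$ by separating the original vertex set $V$ from the $(m-1)e(H_2)$ new internal subdivision vertices, and then apply the Schur complement formula. Ordering $V$ first and grouping the internal vertices by subdivided edge, we have
\[
A_G - 2xI = \begin{pmatrix} A_{H_1} - 2xI & B \\ B^T & D \end{pmatrix},
\]
where the top-left block reflects the fact that the $H_2$-edges have been subdivided away, $D$ is block-diagonal with $e(H_2)$ copies of $A_{P_{m-1}} - 2xI$ (acting on the $m-1$ internal vertices of each subdivided path), and $B$ links each $u \in V$ to the unique internal neighbor on each subdivided edge incident to $u$.

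First I would compute $\det D$. The standard identification $\det(2xI - A_{P_n}) = U_n(x)$ (easily verified from \cref{eq:U-recur} and the three-term recurrence for the characteristic polynomial of a path) gives $\det(A_{P_{m-1}} - 2xI) = (-1)^{m-1} U_{m-1}(x)$, so $\det D = \pm U_{m-1}(x)^{e(H_2)}$. By the Schur complement formula, which holds as a rational-function identity whenever $D$ is invertible and hence as a polynomial identity always,
\[
\det(A_G - 2xI) = \det D \cdot \det\paren{A_{H_1} - 2xI - B D^{-1} B^T}.
\]
It then suffices to show that $BD^{-1}B^T = -\frac{d U_{m-2}(x)}{U_{m-1}(x)} I - \frac{1}{U_{m-1}(x)} A_{H_2}$.

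For each subdivided edge $uw \in E(H_2)$, $B$ has nonzero entries only at the positions connecting $u$ to the first internal vertex and $w$ to the last one, so the contribution of that edge to $BD^{-1}B^T$ involves only the four corner entries $(1,1), (1,m-1), (m-1,1), (m-1,m-1)$ of $(A_{P_{m-1}} - 2xI)^{-1}$. By Cramer's rule, the $(1,1)$ and $(m-1,m-1)$ entries equal $\pm \det(A_{P_{m-2}} - 2xI)/\det(A_{P_{m-1}} - 2xI) = \pm U_{m-2}(x)/U_{m-1}(x)$. For the $(1,m-1)$ entry, the key observation is that deleting row $1$ and column $m-1$ from the tridiagonal matrix $A_{P_{m-1}} - 2xI$ leaves an upper-triangular matrix with $1$'s on the diagonal, so the corresponding minor is $1$. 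A careful sign computation yields diagonal entries $-U_{m-2}(x)/U_{m-1}(x)$ and corner entries $-1/U_{m-1}(x)$. Summing over all edges of $H_2$ and using the $d$-regularity of $H_2$ (each $u \in V$ has exactly $d$ subdivided paths attached to it) then gives the required expression for $BD^{-1}B^T$.

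The main obstacle is purely bookkeeping: tracking the $(-1)^{m-1}$ factors coming from $\det(A_{P_{m-1}} - 2xI)$ alongside the cofactor signs in Cramer's rule, and verifying that they collapse consistently into the ambiguous sign $\pm$ in the statement. The one genuinely useful structural observation is the triangularity of the minor for the corner entry of the path inverse, which forces its determinant to be $1$; everything else is routine linear algebra.
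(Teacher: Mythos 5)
Your proposal is correct, and it is essentially the same argument as the paper's: the paper performs explicit row and column operations that zero out the blocks connecting $V$ to the internal subdivision vertices, which is precisely block Gaussian elimination, i.e., computing the Schur complement with respect to $D$. Your phrasing via the Schur complement formula and the corner entries of $(A_{P_{m-1}} - 2xI)^{-1}$ is a cleaner way to package the same computation (one minor slip: the cofactor matrix you describe is lower-triangular, not upper-triangular, but its determinant is still $1$).
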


\begin{proof}

For each edge in $H_2$, we will perform a sequence of row and column operations on $A_G-2xI$ that does not change the determinant.

Consider an edge $vw\in E(H_2)$, and let $v=x_0,x_1,\dots,x_{m-1},x_m=w$ be the $m$-edge path connecting $v$ and $w$ in $G$. Let $A_{uv}$ be the $(m+1)$ by $(m+1)$ submatrix of $A_G-2xI$ corresponding to the vertices $(x_0,\dots,x_m)$. Let $r_i$ be the row vector in $A_G - 2x I$ corresponding to vertex $x_i$ of $G$ and $r_{ij}$ be the entry of $r_i$ corresponding to vertex $x_j$. 

Before performing the operations for this edge, $A_{uv}$ is $$\begin{bmatrix}
*       &1      &0      &       &\cdots &       &0      \\
1       &-2x    &1      &0      &       &       &       \\
0       &1      &-2x    &1      &0      &       &\vdots \\
        &0      &1      &\ddots &\ddots &\ddots &       \\
\vdots  &       &0      &\ddots &-2x    &1      &0      \\
        &       &       &\ddots &1      &-2x    &1      \\
0       &       &\cdots &       &0      &1      &*      \\
\end{bmatrix}.$$
Our goal is to zero the outer rows and columns of $A_{uv},$ excluding the corners.

To zero $r_{01}$ while keeping the zero entries zero, we add $$\frac{1}{U_{m-1}(x)}\sum_{i=1}^{m-1}U_{m-1-i}(x)r_i$$ to $r_0$. We calculate using \eqref{eq:U-recur} that 
$$\frac{1}{U_{m-1}(x)}\sum_{i=1}^{m-1}U_{m-1-i}(x)r_{ij}=\frac{1}{U_{m-1}(x)}\begin{cases}U_{m-2}(x)&\text{if }j=0\\-U_{m-1}(x)&\text{if }j=1\\1&\text{if }j=m\\0&\text{otherwise,}\end{cases}$$
so this operation zeros $r_{01}$ as desired.

Similarly, to zero $r_{m,m-1}$, we add $$\frac{1}{U_{m-1}(x)}\sum_{i=1}^{m-1}U_{i-1}(x)r_i$$ to $r_m$.


So, $A_{uv}$ is now
$$\begin{bmatrix}
* +\frac{U_{m-2}(x)}{U_{m-1}(x)}      &0      &0      &       &\cdots &0       &\frac{1}{U_{m-1}(x)}      \\
1       &-2x    &1      &0      &       &       &0       \\
0       &1      &-2x    &1      &0      &       &\vdots \\
        &0      &1      &\ddots &\ddots &\ddots &       \\
\vdots  &       &0      &\ddots &-2x    &1      &0      \\
0       &       &       &\ddots &1      &-2x    &1      \\
\frac{1}{U_{m-1}(x)}       &0      &\cdots &       &0      &0      &* +\frac{U_{m-2}(x)}{U_{m-1}(x)}      \\
\end{bmatrix}.$$

Next, we perform similar column operations to zero $r_{10}$ and $r_{m-1,m}$ Since we already have $r_{01}=r_{m,m-1}=0$, these column operations do not change the corner entries of $A_{uv}$. 

After performing this sequence of operations for each edge in $H_2$, we will have added $d\cdot\frac{U_{m-2}(x)}{U_{m-1}(x)}$ to each diagonal entry corresponding to a vertex in $V$. Also, each entry corresponding to an edge in $H_2$ will now be $\frac{1}{U_{m-1}(x)}$. This means we can order the vertices of $G$ so that $A_G-2xI$ has the same determinant as the block matrix 
$$\begin{bmatrix}
D      &       &       &             \\
       &R_m    &       &             \\
       &       &\ddots &             \\
       &       &       &R_m          \\
\end{bmatrix},$$ 
where $$D=A_{H_1}-\left(2x-\frac{dU_{m-2}(x)}{U_{m-1}(x)}\right)I+\frac{1}{U_{m-1}(x)}A_{H_2}$$ and there are $e(H_2)$ copies of the $(m-1)$ by $(m-1)$ matrix $$R_m=\begin{bmatrix}
-2x    &1      &       &       \\
1      &\ddots &\ddots &       \\
       &\ddots &-2x    &1      \\
       &       &1      &-2x    \\
\end{bmatrix}.$$ By induction on $m$ using \eqref{eq:U-recur}, $\det(R_m)=(-1)^{m-1}U_{m-1}(x)$, which gives the desired result.

\end{proof}

Apply \cref{lem:pathlen} to the graph $G$ constructed in \cref{thm:bounded-construction}.
Let 
\[
H_1 = \Cay(\Gamma, S)
\qquad \text{and}  \qquad
H_2 = \Cay(\Gamma, \{t,t^{-1}\}).
\]
Let $A$ and $B$ be the adjacency matrices of $H_1$ and $H_2$ respectively. 
Recall $x U_{m-1}(x) - U_{m-2}(x) = T_m(x)$ from \eqref{eq:T-U}. Since $e(H_2)=\abs{\Gamma}$ and $d=2$ in \cref{lem:pathlen}, we get
\begin{equation}\label{eq:sep-det}
\det(A_G - 2x I) = \pm\det\paren{ U_{m-1}(x) A - 2 T_m (x) I + B}.
\end{equation}

Applying the decomposition of the regular representation into irreducible representations, we find that 
\begin{equation}\label{eq:bounded-decomp}
U_{m-1}(x)A - 2T_m(x)I+ B
= 
\bigoplus_{\rho\text{ irred rep of $\Gamma$}}M_\rho(x)^{\oplus\dim\rho}
\end{equation}
with
\begin{equation}\label{eq:M}
M_\rho (x) = U_{m-1}(x)A_\rho - 2T_m(x) I + B_\rho,
\end{equation}
where, as earlier, we write
$
A_\rho = \sum_{h\in S}\rho(h)$ and $B_\rho = \rho(t) + \rho(t^{-1})$.

As in the earlier section, by \cref{lem:double-coset-irrep}, $\Ind_\Pi^\Gamma(\triv)$ is the direct sum of the trivial 1-dimensional representation and an irreducible representation of dimension $\abs{\Gamma}/\abs{\Pi}-1$, which we call $\varphi$.
For both these representations $\rho\in\{\triv,\varphi\}$, one has $\norm{A_\rho} = \abs{S}$.
For any other irreducible representation $\rho$ of $\Gamma$, one has $\norm{A_\rho} \le \abs{S} - \kappa$ due to the spectral gap of $A$.

Let $y_0$ be the largest real number satsifying
\[
2T_m(y_0)+2-\abs{S}U_{m-1}(y_0) = 0.
\]
Since $2T_m(1)+2-\abs{S}U_{m-1}(1)=4-m\abs{S}<0$ but $2T_m(y)+2-\abs{S}U_{m-1}(y)$ is positive as $y$ tends to infinity, $y_0$ exists and is greater than $1$.

\begin{lemma}\label{lem:irrep-gap} 
Let $\rho$ be an irreducible representation of $\Gamma$, and $m\geq 4$ be an integer. 
\begin{enumerate}
	\item[(a)] If $\rho$ is not contained in $\Ind_\Pi^\Gamma(\triv)$ and $\det M_\rho(x) = 0$, then
	\[
		2T_m(x) \le U_{m-1}(x) (\abs{S}-\kappa) + 2.
	\]
	\item[(b)] 
	If $\rho$ is contained in $\Ind_\Pi^\Gamma(\triv)$, then there exists some $y \ge y_0$ with $\det M_\rho(y) = 0$.
\end{enumerate}
\end{lemma}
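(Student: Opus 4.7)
The plan is to treat both parts through the same lens: $\det M_\rho(x) = 0$ is equivalent to $2T_m(x)$ being an eigenvalue of the Hermitian matrix $U_{m-1}(x) A_\rho + B_\rho$, so Weyl's inequality will control this eigenvalue in terms of $\lambda_{\max}(A_\rho)$ and $\norm{B_\rho}\le 2$. The representation-theoretic input is Frobenius reciprocity: an irreducible $\rho$ is contained in $\Ind_\Pi^\Gamma(\triv)$ if and only if $\operatorname{Res}_\Pi^\Gamma(\rho)$ contains $\triv_\Pi$. Since $A_\rho$ block-diagonalizes along the decomposition of $\operatorname{Res}_\Pi^\Gamma(\rho)$ into $\Pi$-irreducibles, the spectral-gap hypothesis gives $\lambda_{\max}(A_\rho)\le \abs{S}-\kappa$ in case (a), and $\lambda_{\max}(A_\rho)=\abs{S}$ in case (b).

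For part (a), I would work in the range where $U_{m-1}(x)\ge 0$ (which is the regime of interest for top eigenvalues of $G$) and apply Weyl's subadditivity of top eigenvalues:
\[
2T_m(x)\le \lambda_{\max}\paren{U_{m-1}(x)A_\rho+B_\rho}\le U_{m-1}(x)(\abs{S}-\kappa)+2,
\]
using $\lambda_{\max}(B_\rho) \le \norm{B_\rho} \le 2$. For part (b), I would study the continuous function $g(x):=\lambda_{\max}(M_\rho(x))$ on $[y_0,\infty)$. At $x=y_0$, Weyl from below yields
\[
g(y_0)\ge U_{m-1}(y_0)\abs{S}-2T_m(y_0)+\lambda_{\min}(B_\rho)\ge 2-2=0,
\]
where $U_{m-1}(y_0)\abs{S}-2T_m(y_0)=2$ is exactly the defining relation of $y_0$, and $\lambda_{\min}(B_\rho)\ge -2$. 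On the other hand, as $x\to\infty$ the ratio $T_m(x)/U_{m-1}(x)\to\infty$, so the symmetric upper bound $g(x)\le U_{m-1}(x)\abs{S}-2T_m(x)+2$ drives $g(x)\to -\infty$. Continuity of $g$ and the intermediate value theorem then produce some $y\ge y_0$ with $g(y)=0$, i.e.\ $\det M_\rho(y)=0$.

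The only real obstacle is bookkeeping: one must apply Weyl in the correct direction for each part (upper bound on $\lambda_{\max}$ in (a), lower bound in (b)), and keep track of signs so that $U_{m-1}(x)\ge 0$ is used correctly. The normalization in the definition of $y_0$ is calibrated precisely so that the $\lambda_{\min}(B_\rho)\ge -2$ slack is absorbed at $x=y_0$ in part (b); this is what ensures the zero of $\det M_\rho$ occurs at or above $y_0$ rather than below it, which in turn is what will later force the second eigenvalue of $G$ to come from the $\Ind_\Pi^\Gamma(\triv)$ component.
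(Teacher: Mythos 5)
Your proof is correct and follows the paper's strategy closely: Weyl's inequality for part (a) and the intermediate value theorem anchored at $y_0$ for part (b), with the same representation-theoretic input (Frobenius reciprocity plus block-diagonalization of $A_\rho$ along $\operatorname{Res}_\Pi^\Gamma(\rho)$). If anything, your version of (b) is a shade more careful than the paper's: the paper fixes the $\abs{S}$-eigenvector $\bm v$ of $A_\rho$, finds $y\ge y_0$ with $\bm v^TM_\rho(y)\bm v=0$, and asserts that this forces $\det M_\rho(y)=0$ (which is not a valid implication for a single fixed vector), whereas your choice to track $g(y)=\lambda_{\max}(M_\rho(y))$, using $g(y_0)\ge \bm v^TM_\rho(y_0)\bm v\ge 0$ at one end and $g(y)\to-\infty$ at the other, is the clean way to close that step.
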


\begin{proof}
(a) By the above, for $\rho$ not contained in $\Ind_\Pi^\Gamma(\triv)$, $\|A_\rho\|\leq \abs{S}-\kappa$. If $\det M_\rho(x) = 0$, then $2T_m (x)$ is an eigenvalue of $U_{m-1}(x) A_\rho + B_\rho$ (see \cref{eq:M}). Since $\norm{B_\rho} \le 2$, the conclusion follows by Weyl's inequality.
    
(b) For $\rho$ contained in $\Ind_\Pi^\Gamma(\triv)$, $A_\rho$ has $\abs{S}$ as an eigenvalue; let $\bm v$ be the associated normalized eigenvector. Then, consider
    \[
    \bm v^TM_\rho(y)\bm v=\abs{S}U_{m-1}(y)-2T_m(y)+\bm v^TB_\rho\bm v
    \]
    as a continuous function of $y$. As $T_m$ and $U_{m-1}$ are polynomials of degrees $m$ and $m-1$, respectively, the right-hand side is eventually negative as $y$ tends to infinity. On the other hand, at $y=y_0$, we have that
    \[
    \bm v^TM_\rho(y_0)\bm v=\abs{S}U_{m-1}(y_0)-2T_m(y_0)+\bm v^TB_\rho\bm v=2+\bm v^TB_\rho\bm v\geq 0,
    \]
    where we have used that all eigenvalues of $B_\rho$ are in $[-2,2]$. As a result, there exists some $y\geq y_0$ for which $\bm v^TM_\rho(y)\bm v=0$ and thus $\det M_\rho(y)=0$, as desired.
\end{proof}

\begin{lemma} \label{lem:cheby-gap}
	If there exists some $x>y_0$ with
	\begin{equation} \label{eq:cheby-gap-x}
		2T_m(x) \le U_{m-1}(x) (\abs{S}-\kappa) + 2,
	\end{equation}
	then $\abs{S}^{m-1} < 4/\kappa$.
\end{lemma}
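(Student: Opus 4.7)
The plan is to combine the two parts of \cref{lem:cheb-facts} via a direct monotonicity argument centered at $y_0$.

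First, I would rewrite the hypothesis \cref{eq:cheby-gap-x} as
\[
\frac{2T_m(x)-2}{U_{m-1}(x)} \le \abs{S} - \kappa,
\]
and compare it with the value of the same expression at $y_0$. By the defining equation $2T_m(y_0) + 2 = \abs{S} U_{m-1}(y_0)$, we have
\[
\frac{2T_m(y_0) - 2}{U_{m-1}(y_0)} = \abs{S} - \frac{4}{U_{m-1}(y_0)}.
\]
Since $x > y_0 > 1$, \cref{lem:cheb-facts}(a) says the function $z \mapsto (T_m(z)-1)/U_{m-1}(z)$ is increasing on $[1,\infty)$, hence so is $(2T_m(z)-2)/U_{m-1}(z)$. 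Combining with the hypothesis gives
\[
\abs{S} - \frac{4}{U_{m-1}(y_0)} \;<\; \frac{2T_m(x)-2}{U_{m-1}(x)} \;\le\; \abs{S} - \kappa,
\]
which rearranges to $U_{m-1}(y_0) < 4/\kappa$.

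Next I would bring in \cref{lem:cheb-facts}(b), applied at $z = y_0 \ge 1$. By the very definition of $y_0$,
\[
\frac{2T_m(y_0)+2}{U_{m-1}(y_0)} = \abs{S},
\]
so \cref{lem:cheb-facts}(b) gives $U_{m-1}(y_0) \ge \abs{S}^{m-1}$. Chaining with the previous bound yields $\abs{S}^{m-1} \le U_{m-1}(y_0) < 4/\kappa$, which is the desired conclusion.

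There is no real obstacle here: the argument is essentially a two-line chain once the two facts in \cref{lem:cheb-facts} are in hand. The only bookkeeping points are (i) verifying that $y_0 > 1$ so that both parts of \cref{lem:cheb-facts} apply (this is already noted in the paragraph defining $y_0$), and (ii) that the hypothesis $m \ge 4$ is needed precisely to invoke part (b).
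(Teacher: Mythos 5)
Your proof is correct and matches the paper's argument essentially line for line: both use the defining equation of $y_0$ to express $(2T_m(y_0)\mp 2)/U_{m-1}(y_0)$ in terms of $\abs{S}$ and $U_{m-1}(y_0)$, then apply \cref{lem:cheb-facts}(a) (monotonicity) to get $U_{m-1}(y_0) < 4/\kappa$ and \cref{lem:cheb-facts}(b) to get $\abs{S}^{m-1} \le U_{m-1}(y_0)$. No differences worth noting.
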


\begin{proof} Recall that $y_0>1$. By \cref{lem:cheb-facts}(a), we have
$$\abs{S}-\frac4{U_{m-1}(y_0)}=\frac{2T_m(y_0)-2}{U_{m-1}(y_0)}<\frac{2T_m(x)-2}{U_{m-1}(x)}\leq \abs{S}-\kappa.$$
So $U_{m-1}(y_0)<\frac 4\kappa$.
Now, simplifying and applying \cref{lem:cheb-facts}(b) at $z=y_0$ gives 
$$\abs{S}^{m-1}=\left(\frac{2T_m(y_0)+2}{U_{m-1}(y_0)}\right)^{m-1}\leq U_{m-1}(y_0)<\frac 4\kappa,$$
the desired bound.
\end{proof}

\begin{proof}[Proof of \cref{thm:bounded-construction}]
Via \cref{eq:sep-det,eq:bounded-decomp}, the eigenvalues (counting multiplicities) of $G$ are $2x$ where $x$ is a root of $M_\rho(x)$, each repeated $\dim \rho$ times, ranging over all irreducible representations $\rho$ of $\Gamma$.
The trivial 1-dimensional representation of $\Gamma$ gives the the largest eigenvalue of $G$ with multiplicity one. 
By \cref{lem:double-coset-irrep}, there is one other irreducible representation $\varphi$ of $\Ind_\Pi^\Gamma(\triv)$. 
By \cref{lem:irrep-gap}, there is some $y \ge y_0$ satisfying $\det M_\varphi(y) = 0$.
We take the largest such $y$, and claim that $2y$ is the second-largest eigenvalue of $A_G$.
By definition, no other root of $\det M_\varphi$ exceeds $y$. The roots of $\det M_\triv$ are exactly the roots of $2T_m(x)-2-|S|U_{m-1}(x)$. By \cref{lem:cheb-facts}(a), only one such root (which gives the top eigenvalue of $G$) is greater than $1$. So only one of the roots of $\det M_\triv$ exceeds $y$.
Now suppose $x$ is such that $\det M_\rho(x) = 0$ for some irreducible representation $\rho$ other than $\varphi$ and $\triv$ (so that $\norm{A_\rho} \le \abs{S}-\kappa$).
Then $x$ satisfies \cref{eq:cheby-gap-x} by \cref{lem:irrep-gap}(a).
It follows by \cref{lem:cheby-gap}, due to the hypothesis $\abs{S}^{m-1} \ge 4/\kappa$, that $x \le y_0\le y$. 
Therefore, $2y$ is indeed the second largest eigenvalue of $G$, and it has multiplicity at least $\varphi = \abs{\Gamma}/\abs{\Pi} - 1$ times in $G$.
\end{proof}

\begin{proof}[Proof of Theorem \ref{thm:bounded-intro}] 
Let $\Pi = \FF_q^\times$ for some prime power $q\geq 4$.
Suppose $S=\{s,s^{-1}\}$ generates the cyclic group $\Pi$.
Using the action of $\Pi = \FF_q^\times$ on $\FF_q$ by multiplication, set $\Gamma = \FF_q^\times \ltimes \FF_q$ ($\Gamma$ is also the affine group of $\FF_q$).
Then $\abs{\Pi \backslash \Gamma /\Pi} = 2$ since this is the number of $\Pi$-orbits in $\FF_q$. 
As $\Cay(\Gamma,S)$ is simply $q$ copies of a $(q-1)$-cycle,
$$\kappa=2-2\cos\left(\frac{2\pi}{q-1}\right)\geq \frac{32}{(q-1)^2}.$$
Applying \cref{thm:bounded-construction} with $m = \lceil 2\log_2(q-1)\rceil-2$ (so that the hypothesis $\abs{S}^{m-1}\ge 4/\kappa$ is satisfied) gives a graph with maximum degree $4$ on $q(q-1)m$ vertices with second eigenvalue multiplicity at least $q-1$, as claimed.
\end{proof}

\section{Approximate second eigenvalue multiplicity}
\label{sec:approx1}
In this section we show \cref{thm:approx-intro}, our result on approximate second eigenvalue multiplicity. Our construction is as follows. We sample a random $3$-regular graph $H$ on $N$ vertices. We replace each vertex of $H$ by a copy of $K_4$ with vertices labeled by $\{1,2,3,4\}$, and replace each edge of $H$ with four disjoint paths of length $\ell = \Theta(\log\log N)$ connecting identically labeled vertices of $K_4$. 
The resulting graph $G = G(H, \ell)$ will have the desired properties with high probability. 

\medskip

Recall the following result about the eigenvalues of random regular graphs.

\begin{theorem}[Friedman's second eigenvalue theorem {\cite[Theorem 1.4]{Fri08}}]
\label{thm:Friedman}
Let $H$ be a $3$-regular graph on $n$ labeled vertices chosen uniformly at random (with $n$ even).
For every fixed $\epsilon > 0$, with probability $1 - o(1)$ as $n \to \infty$, we have
$$\lambda_2(H) \leq 2\sqrt{2} + \epsilon.$$
\end{theorem}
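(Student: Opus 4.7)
My plan is to use the trace method together with a sharp enumeration of closed walks in a random $3$-regular graph. The starting point is the observation that
\[
\lambda_2(H)^{2k} \le \operatorname{Tr}(A_H^{2k}) - \lambda_1(H)^{2k} = \operatorname{Tr}(A_H^{2k}) - 3^{2k},
\]
so that bounding $\lambda_2(H)$ reduces to controlling the expected number of closed walks of length $2k$ in $H$ (above the contribution of the top eigenvector) and then applying Markov's inequality. To distinguish $2\sqrt{2}+\epsilon$ from $2\sqrt{2}$, one needs to take $k = k(n)$ slowly growing with $n$, on the order of $\log n$ with a small enough constant.

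The heart of the argument is to show that for an appropriate random regular graph model---for instance the permutation model or the contiguous configuration model on $n$ labeled vertices---one has, for $k$ growing sufficiently slowly,
\[
\EE\sqb{\operatorname{Tr}(A_H^{2k})} \le 3^{2k} + n \cdot (2\sqrt{2})^{2k} \cdot k^{O(1)}.
\]
Closed walks of length $2k$ from a given vertex are grouped by the isomorphism class of the underlying multigraph (the ``shape'') they trace out. Walks whose shape is a tree contribute the ``Kesten--McKay'' main term $(2\sqrt{2})^{2k}$ up to a subexponential factor, matching the spectral radius of the $3$-regular infinite tree. The remaining (``tangled'') walks trace a shape with more edges than vertices, i.e., one containing at least one cycle, and the probabilistic cost in the permutation model of each such extra cycle is a factor of roughly $n^{-1}$. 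One must balance this gain against the combinatorial explosion in the number of possible shapes.

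The main obstacle, and what historically made this result hard, is the sharp treatment of \emph{nearly tangle-free} walks: those whose shape is tree-like except for a small substructure. Naive counting yields constants too weak to produce $2\sqrt{2}+\epsilon$. Friedman handled this via a delicate ``selective trace'' argument that iteratively accounts for the effect of short cycles. A more recent route, due to Bordenave, is to apply the trace method not to $A_H$ itself but to the non-backtracking operator of $H$, whose spectrum determines that of $A_H$ via the Ihara--Bass formula; non-backtracking closed walks have appreciably cleaner combinatorics and allow one to isolate the tangled contributions more directly. With either route, once the displayed bound on $\EE[\operatorname{Tr}(A_H^{2k})]$ is in hand, Markov's inequality gives $\lambda_2(H) \le 2\sqrt{2}+\epsilon$ with probability $1 - o(1)$.
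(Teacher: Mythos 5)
The paper does not prove this theorem; it is invoked as a black box, citing \cite[Theorem 1.4]{Fri08}. So there is no internal proof to compare against, and your proposal should be judged as a standalone sketch of Friedman's theorem.

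As a survey of the strategy your outline is accurate: the trace method, walk enumeration by shape, tree-like shapes giving the Kesten--McKay term $(2\sqrt2)^{2k}$, the $n^{-1}$ cost per excess edge, and the two known routes (Friedman's selective trace and Bordenave's non-backtracking/Ihara--Bass approach). But as a \emph{proof} it has a genuine gap, and in fact the very step you present as clean is the step that fails. Taking $k$ on the order of $\log n$ and applying Markov's inequality to the raw expectation $\EE[\operatorname{Tr}(A_H^{2k})]$ does not give $2\sqrt2+\epsilon$: that expectation is dominated by rare, heavily tangled configurations, since a walk spending $\Theta(k)$ steps in a short cycle contributes roughly $3^{2k}$ rather than $(2\sqrt2)^{2k}$, with probability only polynomially small in $n$, which is not small compared to $((2\sqrt2+\epsilon)/(2\sqrt2))^{-2k} = n^{-O(\epsilon)}$. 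Friedman's "selective trace" and Bordenave's approach both exist precisely to get around this: one must first show that the graph is tangle-free with high probability and then run the trace/moment calculation on a \emph{tangle-free truncation} of the walk count (or of the non-backtracking operator), not on the full trace. Your proposal names this obstacle but does not execute any mechanism for handling it, and without such a mechanism the displayed inequality $\EE[\operatorname{Tr}(A_H^{2k})] \le 3^{2k} + n(2\sqrt2)^{2k}k^{O(1)}$ is false for $k = \Theta(\log n)$. Two minor further points: the permutation model is for even degree, so for $3$-regular graphs you want the configuration/pairing model plus contiguity (you do gesture at this); and the reduction $\lambda_2^{2k}\le\operatorname{Tr}(A_H^{2k})-3^{2k}$ is fine since $2k$ is even, but it actually controls $\max(|\lambda_2|,|\lambda_n|)$, which is more than the statement needs and is consistent with Friedman's full theorem.
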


\begin{theorem}[Kesten--McKay law {\cite{Kes59,M81}}]
\label{thm:K-M}
Let $H_n$ be a $3$-regular graph on $n$ labeled vertices chosen uniformly at random.
Then, with probability $1$, as $n \to \infty$ along even integers, the empirical distribution of the eigenvalues of $H_n$ converges to the probability distribution with density function
\[
z \mapsto  \frac{3\sqrt{8 - z^2}}{2\pi(9 - z^2)} 1_{[-2\sqrt{2},2\sqrt{2}]}(z).
\]
\end{theorem}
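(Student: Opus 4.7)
My plan is to prove the Kesten--McKay law by the method of moments, together with a local convergence argument and a concentration bound. Because the claimed limit measure has compact support contained in $[-2\sqrt{2}, 2\sqrt{2}]$ (and $\abs{\lambda_i(H_n)} \le 3$ for any $3$-regular graph), convergence in distribution is equivalent to convergence of all moments. So it suffices to show that for every fixed $k \ge 0$,
\[
\frac{1}{n}\sum_{i=1}^n \lambda_i(H_n)^k \;=\; \frac{1}{n}\Tr{A_{H_n}^k} \;\longrightarrow\; m_k \;:=\; \int_{-2\sqrt{2}}^{2\sqrt{2}} z^k \cdot \frac{3\sqrt{8-z^2}}{2\pi(9-z^2)}\,dz
\]
almost surely as $n \to \infty$ along even integers.

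The quantity $\Tr{A_{H_n}^k}$ equals the number of closed walks of length $k$ in $H_n$, so $\frac{1}{n}\Tr{A_{H_n}^k}$ is the expected number of closed walks of length $k$ from a uniformly random root. If the $\lceil k/2 \rceil$-neighborhood of that root is a tree, this count equals $c_k^{(3)}$, the number of closed walks of length $k$ starting from the root of the infinite $3$-regular tree $T_3$. Thus the first step is to show that in a uniformly random $3$-regular graph on $n$ vertices, the expected number of vertices whose $\lceil k/2 \rceil$-neighborhood fails to be a tree is $o(n)$. This is a standard small-subgraph count (most cleanly done via the configuration model together with contiguity between the configuration model conditioned on simplicity and the uniform model on simple $3$-regular graphs): the expected number of cycles of length $\le 2k+1$ is $O(1)$, so only $O(1)$ vertices on average lie in such cycles, contributing at most $O_k(1)$ to the trace. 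Therefore
\[
\EE\!\left[\frac{1}{n}\Tr{A_{H_n}^k}\right] \;\longrightarrow\; c_k^{(3)}.
\]
The identification $c_k^{(3)} = m_k$ is a classical spectral fact for $T_3$: the Green's function of the simple random walk on $T_3$ can be computed explicitly, and its Stieltjes transform has imaginary part given by the Kesten--McKay density; equivalently, $\sum_{k\ge 0} c_k^{(3)} z^{-k-1}$ matches the Stieltjes transform of the claimed limit measure. I would include a short derivation of this, e.g., by noting that closed walks on $T_3$ from the root correspond to Dyck-path-like objects whose generating function satisfies a quadratic recursion solvable in closed form.

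For the almost sure (rather than expected) convergence, the remaining step is concentration. The plan is a second moment bound: estimate $\Var\!\paren{\frac{1}{n}\Tr{A_{H_n}^k}}$ by counting pairs of closed walks of length $k$. Using the configuration model again, one shows that pairs of walks that share no edges contribute the product of expectations up to $o(1)$, while walks that share any edge form a negligible $O_k(1/n)$ fraction. This gives $\Var\!\paren{\frac{1}{n}\Tr{A_{H_n}^k}} = O_k(1/n^2)$, which is summable, so Borel--Cantelli delivers almost sure convergence of each moment. Since the set of $k \in \NN$ is countable, almost sure convergence holds simultaneously for all $k$, and the weak convergence of empirical spectral distributions follows from the method of moments (noting that Kesten--McKay is determined by its moments, being compactly supported).

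The main obstacle is the concentration step: the variance computation requires a careful combinatorial accounting of how two closed walks of length $k$ can interact in a random $3$-regular graph, and getting a variance bound that is summable in $n$ needs either a switching argument, an explicit second-moment calculation in the configuration model, or an appeal to the Azuma-type vertex-exposure inequality of Wormald. The path-counting is routine for any fixed $k$ but notationally heavy; the only subtlety is handling the dependencies between edges in the uniform model, which I would sidestep by working in the configuration model and transferring to the uniform $3$-regular model via the standard bounded-ratio contiguity result of Janson.
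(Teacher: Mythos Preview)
The paper does not give a proof of this statement at all: it is quoted as a classical result (with citations to Kesten and McKay) and used as a black box in the proof of \cref{cor:basic-expander}. So there is nothing in the paper to compare your proposal against.

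That said, your outline is essentially McKay's original argument and is correct in its broad strokes: both measures are supported in $[-3,3]$, so moment convergence suffices; the $k$th moment of the empirical spectral measure is $\frac{1}{n}\Tr(A_{H_n}^k)$; a uniformly random $3$-regular graph has $O_k(1)$ expected short cycles, so almost all vertices have a tree neighborhood of radius $\lceil k/2\rceil$ and contribute exactly the return count $c_k^{(3)}$ on the $3$-regular tree; and the generating function of $c_k^{(3)}$ matches the Stieltjes transform of the Kesten--McKay density.

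One point deserves tightening. Your justification for the variance bound (``pairs of walks that share no edges contribute the product of expectations up to $o(1)$, while walks that share any edge form a negligible $O_k(1/n)$ fraction'') does not obviously yield $\Var\!\paren{\tfrac{1}{n}\Tr(A_{H_n}^k)} = O_k(1/n^2)$; read literally it only gives $O_k(1/n)$, which is not summable. The cleaner route is the one implicit in your expectation argument: write $\Tr(A_{H_n}^k) = n\,c_k^{(3)} + R_k$, where $\abs{R_k}$ is bounded by a constant (depending only on $k$) times the number of cycles of length at most $k$ in $H_n$. Since the short cycle counts in a random $3$-regular graph converge jointly to independent Poissons, $\Var(R_k) = O_k(1)$, hence $\Var\!\paren{\tfrac{1}{n}\Tr(A_{H_n}^k)} = O_k(1/n^2)$, and Borel--Cantelli finishes as you say. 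With that adjustment the sketch is complete.
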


We need graphs with the following properties.

\begin{corollary}
\label{cor:basic-expander}
For any $\epsilon > 0$, there exists some $a(\epsilon) > 0$ such that for all sufficiently large even integer $n$, there exists a connected $3$-regular graph $H$ on $n$ vertices such that

(1) $H$ is connected and has spectral gap at least $0.01$, and 

(2) $H$ has at least $a(\epsilon)n$ eigenvalues in the interval $[(1 - \epsilon)\lambda_2(H), \lambda_2(H)]$.
\end{corollary}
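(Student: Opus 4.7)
The plan is to take $H$ to be a uniformly random $3$-regular graph on $n$ labeled vertices (with $n$ even), fix an auxiliary parameter $\epsilon' = \epsilon'(\epsilon) > 0$ to be chosen small momentarily, and verify that both properties hold with probability $1 - o(1)$ as $n \to \infty$; then a single $H$ witnessing the corollary exists for all sufficiently large $n$.

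For property (1), I invoke \cref{thm:Friedman} with $\epsilon'$ to obtain $\lambda_2(H) \le 2\sqrt 2 + \epsilon'$ with high probability. Since $\lambda_1(H) = 3$ for any $3$-regular graph, the spectral gap is at least $3 - 2\sqrt 2 - \epsilon' \approx 0.172 - \epsilon'$, which exceeds $0.01$ once $\epsilon'$ is sufficiently small. The simplicity of the eigenvalue $3$ under this condition also forces $H$ to be connected, since for a regular graph the multiplicity of the top eigenvalue equals the number of connected components.

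For property (2), the Kesten--McKay law (\cref{thm:K-M}) says that the empirical eigenvalue distribution of $H$ converges almost surely, in the weak sense, to the measure with density $\rho(z) = \frac{3\sqrt{8 - z^2}}{2\pi(9 - z^2)}$ on $[-2\sqrt 2, 2\sqrt 2]$. Both endpoints of $[(1-\epsilon')2\sqrt 2,\, 2\sqrt 2]$ are continuity points of the limit cdf, so with probability $1 - o(1)$ the number of eigenvalues of $H$ in this interval is at least $\tfrac12 p(\epsilon')\, n$, where $p(\epsilon') := \int_{(1-\epsilon')2\sqrt 2}^{2\sqrt 2} \rho(z)\,dz > 0$. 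Each such eigenvalue is strictly less than $3 = \lambda_1$, and hence at most $\lambda_2(H)$ by definition.

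To close the gap I choose $\epsilon'$ small enough (in terms of $\epsilon$) that $(1-\epsilon)(2\sqrt 2 + \epsilon') \le (1-\epsilon') 2\sqrt 2$; this is an easy linear inequality in $\epsilon'$. Combined with the Friedman bound this yields $(1-\epsilon)\lambda_2(H) \le (1-\epsilon')2\sqrt 2$, so all eigenvalues counted above lie in $[(1-\epsilon)\lambda_2(H), \lambda_2(H)]$. Taking $a(\epsilon) := p(\epsilon')/2$ then finishes the plan. The only delicate point is this coordination of $\epsilon'$ with $\epsilon$ to guarantee the interval inclusion while keeping the Kesten--McKay mass $p(\epsilon')$ positive (which holds for every $\epsilon' > 0$ since the density $\rho$ is strictly positive on the open interval); beyond this bookkeeping, the proof is a direct application of the two quoted theorems, with no real obstacle.
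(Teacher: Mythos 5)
Your proposal is correct and follows essentially the same route as the paper: sample a random $3$-regular graph, invoke Friedman's theorem to control $\lambda_2(H)$ and connectivity, invoke the Kesten--McKay law to get a positive fraction of eigenvalues near $2\sqrt{2}$, and tune an auxiliary parameter so that the Kesten--McKay window $[(1-\epsilon')2\sqrt{2},\,2\sqrt{2}]$ sits inside $[(1-\epsilon)\lambda_2(H),\,\lambda_2(H)]$. The paper takes $\epsilon'=\epsilon/2$ outright and is terser about the interval inclusion, whereas you spell out the required linear inequality in $\epsilon'$ explicitly; this is a presentational difference only.
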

\begin{proof}
Let $H$ be a random $3$-regular graph on $n$ vertices, chosen uniformly from all labeled $3$-regular graphs on $n$ vertices. By \cref{thm:Friedman} and \cref{thm:K-M}, if we let 
$$a(\epsilon) = \frac{1}{2}\int_{2\sqrt{2} - \sqrt{2}\epsilon }^{2\sqrt{2}} \frac{3 \sqrt{8 - z^2}}{2\pi(9 - z^2)}dz$$
then with probability $1 - o(1)$,

(1) $H$ is connected (since the spectral gap is positive), 

(2) $\lambda_2(H) \leq 2\sqrt{2} + \epsilon / 2$, and

(3) $m_H([(1 - \epsilon / 2)\cdot 2\sqrt{2}, 2\sqrt{2}]) > a(\epsilon)n.$

So $H$ with desired properties exists for all sufficiently large even $n$.
\end{proof}

\begin{construction}
\label{con:second}
Let $H$ be a $3$-regular graph on $n$ vertices. We construct a new graph $G$ from $H$. Let the vertex set $V$ be $\{(i, v): v \in V(H), i \in \{1,2,3,4\}\}$. Let the graph $H_1$ be the disjoint union of $n$ copies of $K_4$ with vertices $\{(i, v):i \in \{1,2,3,4\}\}$ for each $v \in V(H)$, and let the graph $H_2$ be the disjoint union of $4$ copies of $H$ with vertices $\{(i, v): v\in V(H)\}$ for each $i \in [4]$. We construct $G = G(H, \ell)$ as graph obtained by first overlaying $H_1$ and $H_2$ and then replacing each edge of $H_2$ with a path of length $\ell$.
\end{construction}
In other words, we take the Cartesian product $H \square K_4$, then replace each edge in the image derived from an edge of $H$ with a path of length $\ell$.

Our main claim is the following eigenvalue distribution result for $G$.
\begin{theorem}
\label{thm:second}
For any large even integer $n$, positive integer $\ell > 10$ and positive constant $\epsilon > 0$, let $H$ be a $3$-regular graph on $n$ vertices satisfying (1) and (2) of  \cref{cor:basic-expander}. let $G = G(H, \ell)$ be as in \cref{con:second}. Then, there exist absolute constants $C_1 > 0$ and $\alpha_0 > 1$ such that

(1) $G$ is connected, and the maximum degree of $G$ is $6$.

(2) The spectral gap $\kappa$ of $G$ is at least $C_1\alpha_0^{-\ell}$.

(3) $G$ has at least $a(\epsilon)n$ eigenvalues in the interval
$$\left[\lambda_2(G)(1 - C_1^{-1} \epsilon\alpha_0^{-\ell}), \lambda_2(G)\right],$$
where $a(\epsilon)$ is defined in \cref{cor:basic-expander}.
\end{theorem}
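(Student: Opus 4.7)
The plan is to reduce the eigenvalue distribution of $G$ to that of $H$ via \cref{lem:pathlen}, then analyze the resulting Chebyshev equations asymptotically. Connectivity and the degree bound are immediate from \cref{con:second}: $G$ is connected because $H$ is connected and the $K_4$'s link the four copies; each old vertex $(i,v)$ has $3$ neighbors in its $K_4$ plus $3$ path endpoints (one per $H$-edge at $v$) for degree $6$, and every subdivision vertex has degree $2$. For the spectral side, identify $V(G)$ with $V(H) \times [4]$; then $A_{H_1} = I_n \otimes (J_4 - I_4)$ and $A_{H_2} = A_H \otimes I_4$ commute and are jointly diagonalized by tensor products of eigenvectors of $A_H$ (eigenvalue $\lambda$) with those of $J_4 - I_4$ (eigenvalue $\mu \in \{3, -1\}$, multiplicities $1$ and $3$). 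Feeding this into \cref{lem:pathlen}, for $x$ away from zeros of $U_{\ell-1}$, $2x$ is a $G$-eigenvalue of multiplicity $m$ exactly when
\[
f_\mu(x) := (2x - \mu) U_{\ell-1}(x) - 3 U_{\ell-2}(x) = \lambda
\]
for pairs $(\mu, \lambda)$ in the joint spectrum whose combined multiplicities sum to $m$; the zeros of $U_{\ell-1}$ lie in $(-1, 1)$ and yield $G$-eigenvalues in $(-2, 2)$, irrelevant for the top of the spectrum.

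To identify the top two eigenvalues, observe that $f_{-1} = f_3 + 4 U_{\ell-1}$ and direct substitution with $U_k(1) = k+1$ gives $f_{-1}(1) = 3$. Parameterizing $x = (\alpha + \alpha^{-1})/2 = \cosh\theta$ with $\alpha = e^\theta \ge 1$ and applying \cref{eq:T-alpha,eq:U-alpha},
\[
f_3(x) \sinh\theta = 3 \sinh\theta \cosh(\ell\theta) - (\cosh\theta + 3) \sinh(\ell\theta),
\]
from which $f_3$ is strictly increasing on $[1, \infty)$ with $f_3(1) = 3 - 4\ell$, and similarly $f_{-1}$ is strictly increasing on $[1, \infty)$. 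Since every eigenvalue $\lambda$ of the $3$-regular graph $H$ satisfies $\lambda \le 3 = f_{-1}(1)$, every $\mu = -1$ solution has $x \le 1$, producing $G$-eigenvalues in $[-2, 2]$. Meanwhile $f_3(1) < 0 \le \lambda_2(H)$ yields a unique root $x_\lambda^* > 1$ of $f_3(x) = \lambda$ for every $\lambda \in [0, 3]$, so $\lambda_1(G) = 2 x_3^*$, $\lambda_2(G) = 2 x_{\lambda_2(H)}^*$, and more generally every eigenvalue of $H$ in a subinterval $[\lambda', \lambda_2(H)]$ lifts via $f_3^{-1}$ to a $G$-eigenvalue in $[2 x_{\lambda'}^*, 2 x_{\lambda_2(H)}^*]$ of the same multiplicity.

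For the quantitative bounds, set $\alpha_0 := (3 + \sqrt{17})/2$, the positive root of $e^\theta - 2e^{-\theta} = 3$ (equivalently of $3 \sinh\theta = \cosh\theta + 3$); this is exactly the value at which the coefficient of $e^{\ell\theta}$ in the expansion of $f_3(x)\sinh\theta$ displayed above vanishes. Expanding $\theta = \theta^* + \delta$ around $\theta^* = \log\alpha_0$ shows that for every $\lambda \in [0, 3]$ the root satisfies $\delta_\lambda = \Theta(\alpha_0^{-\ell})$, so every $x_\lambda^*$ lies in an $O(\alpha_0^{-\ell})$-neighborhood of $x^* = \cosh\theta^*$, on which $f_3'(x) = \Theta(\alpha_0^\ell)$ uniformly. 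By the mean value theorem,
\[
\lambda_1(G) - \lambda_2(G) = 2(x_3^* - x_{\lambda_2(H)}^*) \ge \frac{2(3 - \lambda_2(H))}{\sup f_3'} \ge C_1 \alpha_0^{-\ell}
\]
using $3 - \lambda_2(H) \ge 3 - 2\sqrt{2} - \eps$, and the $a(\eps) n$ eigenvalues of $H$ in $[(1-\eps)\lambda_2(H), \lambda_2(H)]$ furnished by \cref{cor:basic-expander} lift to the same number of $G$-eigenvalues within $2\eps \lambda_2(H)/\inf f_3' \le C \eps \alpha_0^{-\ell}$ below $\lambda_2(G)$, which (since $\lambda_2(G) = \Theta(1)$) gives the stated relative bound after absorbing constants. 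The main technical obstacle is the matching two-sided bound on $f_3'$ over the shrinking $O(\alpha_0^{-\ell})$-window containing every relevant root; extracting a single absolute $\alpha_0 > 1$ that controls both the spectral gap and the approximate-multiplicity interval forces a careful Chebyshev asymptotic expansion.
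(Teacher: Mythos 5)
Your proposal mirrors the paper's approach: reduce via \cref{lem:pathlen} to a Chebyshev functional equation, then analyze the resulting function near $\alpha_0 = (3+\sqrt{17})/2$. Your tensor-product framing ($A_{H_1} = I_n \otimes (J_4 - I_4)$, $A_{H_2} = A_H \otimes I_4$) is a clean reformulation of the paper's decomposition $\RR^V = W \oplus W^\perp$ into the symmetric subspace and its complement, and identifies the same $\alpha_0$ via the same vanishing of the leading exponential coefficient. So the strategy and all key objects agree with the paper.

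However, there is one outright false auxiliary claim and one substantive deferral. The false claim: ``$f_3$ is strictly increasing on $[1,\infty)$.'' Expanding $f_3(\cosh\theta) = 3\cosh(\ell\theta) - (\cosh\theta+3)\frac{\sinh(\ell\theta)}{\sinh\theta}$ to second order at $\theta = 0$ gives $f_3 = (3-4\ell) + \theta^2\bigl(\tfrac{3\ell^2}{2} - \tfrac{2\ell(\ell^2-1)}{3} - \tfrac{\ell}{2}\bigr) + O(\theta^4)$, and the coefficient of $\theta^2$ is negative for $\ell\geq 2$ --- so $f_3$ is strictly \emph{decreasing} near $x=1$. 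The correct statement, and the one the paper proves (\cref{lem:f-bash-1}), is that $f_3'(x) > 0$ whenever $f_3(x) \geq 0$ and $2x > 3$. Your argument only uses this weaker monotonicity (for uniqueness of $x_\lambda^*$ with $\lambda \in [0,3]$), so the error is repairable, but as written the stated justification is wrong. By contrast your claim that $f_{-1}$ is strictly increasing on $[1,\infty)$ is correct and is exactly what is needed to rule out $\mu = -1$ contributions above $2$, but you also do not prove it; the paper instead establishes the equivalent inequality $U_{\ell-2}(\lambda/2)+1 < U_{\ell-1}(\lambda/2)$ for $\lambda>2$ by induction.

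The substantive deferral is the uniform two-sided bound $f_3'(x) = \Theta(\alpha_0^\ell)$ on the $O(\alpha_0^{-\ell})$-window containing every root $x_\lambda^*$ with $\lambda\in[0,3]$. You correctly identify this as the crux and correctly predict the shape of the answer, but this is precisely the content of the paper's \cref{lem:f-bash-2}, proved by a fairly delicate asymptotic analysis in the appendix (bounding $A(\alpha), A'(\alpha), B(\alpha), B'(\alpha)$ separately and combining). Without it, conclusions (2) and (3) are not established; you have only sketched that they should follow. A complete proof needs both the corrected statement of monotonicity and the full quantitative derivative estimates.
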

We first see that \cref{thm:second} implies \cref{thm:approx-intro}.
\begin{proof}[Proof of \cref{thm:approx-intro}]
Let $\epsilon = \delta C_1^2 / 2\alpha_0 C$, and let $N$ be any large even integer. Take $\ell = \floor{\log_{\alpha_0} (\frac{C_1}C\log N)}$. Let $H$ be a $3$-regular graph on $N$ vertices satisfying (1) and (2) of  \cref{cor:basic-expander}. We consider $G = G(H, \ell)$. Then $G$ is a connected graph with $n = (6\ell - 2)N$ vertices, and it is straightforward to verify that $G$ satisfies the desired properties.
\end{proof}
Now, we prove \cref{thm:second}. We first give a characterization of the relevant portion of the spectrum of $G = G(H, \ell)$.
\begin{lemma}
\label{lem:f}
Let $H$ be any $3$-regular graph, $\ell > 10$ be any positive integer, and $G = G(H, \ell)$. Then $\lambda > 2$ is an eigenvalue of $A_G$ if and only if
$$f(\lambda) = \left(\lambda - \frac{3U_{\ell - 2}(\lambda / 2) }{U_{\ell - 1}(\lambda / 2)} - 3\right)U_{\ell - 1}(\lambda / 2)$$
is an eigenvalue of $A_H$. Furthermore, the multiplicity of $\lambda$ in $A_G$ is at least the multiplicity of $f(\lambda)$ in $A_H$.
\end{lemma}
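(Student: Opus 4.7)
The plan is to apply \cref{lem:pathlen} to $G = G(H, \ell)$ with $H_1$ the disjoint union of the $n$ copies of $K_4$ (one per vertex of $H$) and $H_2$ the disjoint union of four copies of $H$ (one per label $i \in \{1,2,3,4\}$), so that $H_2$ is $3$-regular. \cref{lem:pathlen} gives, with $\lambda = 2x$,
\begin{equation*}
\det(A_G - \lambda I) = \pm U_{\ell-1}(x)^{e(H_2)} \det M(\lambda), \qquad M(\lambda) := A_{H_1} - \paren{\lambda - \frac{3U_{\ell-2}(\lambda/2)}{U_{\ell-1}(\lambda/2)}} I + \frac{A_{H_2}}{U_{\ell-1}(\lambda/2)}.
\end{equation*}
For $\lambda > 2$ one has $U_{\ell-1}(\lambda/2) \ne 0$, so $\lambda$ is an eigenvalue of $A_G$ if and only if $\det M(\lambda) = 0$. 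With vertices ordered as $(i,v) \in \{1,2,3,4\} \times V(H)$, we have $A_{H_1} = I_n \otimes A_{K_4}$ and $A_{H_2} = A_H \otimes I_4$, which commute; diagonalizing simultaneously in a basis $\{v_i \otimes w_j\}$ with $A_H v_i = \mu_i v_i$ and $A_{K_4} w_j = \nu_j w_j$ (where $\nu_1 = 3$ and $\nu_2 = \nu_3 = \nu_4 = -1$), $\det M(\lambda) = 0$ iff some pair $(i,j)$ satisfies
\begin{equation*}
(\lambda - \nu_j) U_{\ell-1}(\lambda/2) - 3 U_{\ell-2}(\lambda/2) = \mu_i.
\end{equation*}
For $\nu_j = 3$ this reads $f(\lambda) = \mu_i$, and for $\nu_j = -1$ it becomes $g(\lambda) := f(\lambda) + 4U_{\ell-1}(\lambda/2) = \mu_i$.

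The crux is to eliminate the $\nu_j = -1$ case for $\lambda > 2$. Since $H$ is $3$-regular, every $\mu_i \in [-3, 3]$, so it suffices to prove $g(\lambda) > 3$ for $\lambda > 2$. I will parametrize $\lambda = 2\cosh\theta$ with $\theta > 0$, use $U_{k-1}(\cosh\theta) = \sinh(k\theta)/\sinh\theta$, and simplify via $2\cosh\theta\sinh(\ell\theta) = \sinh((\ell+1)\theta) + \sinh((\ell-1)\theta)$ together with $\sinh A - \sinh B = 2\cosh\tfrac{A+B}{2}\sinh\tfrac{A-B}{2}$, reducing $g(\lambda) - 3$ to
\begin{equation*}
g(\lambda) - 3 = 2\cosh(\ell\theta) + \frac{\cosh((2\ell - 1)\theta/2)}{\cosh(\theta/2)} - 3,
\end{equation*}
which is strictly positive for $\theta > 0$ and $\ell \geq 2$ since $\cosh(\ell\theta) > 1$ and, by monotonicity of $\cosh$ on $[0,\infty)$, $\cosh((2\ell-1)\theta/2) > \cosh(\theta/2)$. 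This gives the stated equivalence.

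For the multiplicity bound, each eigenvector $v_i$ of $A_H$ with $\mu_i = f(\lambda)$ produces an element $v_i \otimes w_1$ of $\ker M(\lambda)$ (where $w_1$ is the $\nu = 3$ eigenvector of $A_{K_4}$). These vectors are linearly independent, and by the row/column operations used in the proof of \cref{lem:pathlen} they lift to linearly independent eigenvectors of $A_G$ with eigenvalue $\lambda$, so the multiplicity of $\lambda$ in $A_G$ is at least the multiplicity of $f(\lambda)$ in $A_H$. The main technical obstacle is verifying $g(\lambda) > 3$ on $(2, \infty)$; everything else is routine given the tensor product structure.
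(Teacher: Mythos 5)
Your proof follows essentially the same route as the paper's: apply \cref{lem:pathlen} with the same choice of $H_1$ (disjoint $K_4$'s) and $H_2$ (four copies of $H$), and then show that for $\lambda > 2$ the contribution to $\det M(\lambda)$ from the ``antisymmetric'' part of the space cannot vanish. The paper's invariant-subspace decomposition $W \oplus W^\perp$ is exactly your tensor decomposition: $W = \RR^{V(H)} \otimes \operatorname{span}(\mathbf 1_4)$ is the $\nu=3$ eigenspace of $A_{K_4}$, and $W^\perp$ is the $\nu=-1$ part. The inequality you must rule out, $(\lambda+1)U_{\ell-1}(\lambda/2) - 3U_{\ell-2}(\lambda/2) \le 3$, is literally the same one the paper's quadratic-form argument on $W^\perp$ boils down to. You prove it via the hyperbolic identity $g(\lambda) - 3 = 2\cosh(\ell\theta) + \cosh((2\ell-1)\theta/2)/\cosh(\theta/2) - 3 > 0$; the paper instead shows $U_{\ell-2}(\lambda/2) + 1 < U_{\ell-1}(\lambda/2)$ for $\lambda>2$ by a short induction and then substitutes. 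Both are correct and comparably short; your tensor-product phrasing makes the mechanism (exactly three excess eigenvalues $-1$ per $K_4$, which push $g$ strictly above $3$) a bit more transparent, while the paper's approach avoids needing to explicitly diagonalize $A_{K_4}$.

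One step is stated too loosely: ``by the row/column operations used in the proof of \cref{lem:pathlen} they lift to linearly independent eigenvectors of $A_G$.'' Row and column operations correspond to left/right multiplication by invertible matrices $P,Q$, and these transform null spaces ($\ker(A_G - \lambda I) = Q\ker(P(A_G-\lambda I)Q)$); since $\det R_m = \pm U_{\ell-1}(\lambda/2) \neq 0$ for $\lambda>2$, the only surviving block is $M(\lambda)$, so indeed $\dim\ker(A_G-\lambda I) = \dim\ker M(\lambda)$. But the phrasing suggests the row operations themselves ``move'' eigenvectors, which they do not. Either state it via invertibility of the transformation as above, or (more in the spirit of your eigenvector picture) extend $\phi \in \ker M(\lambda)$ to a genuine eigenvector of $A_G$ by solving the three-term recurrence $\psi(x_{i+1}) = \lambda\psi(x_i) - \psi(x_{i-1})$ along each subdivided path, which is possible precisely because $U_{\ell-1}(\lambda/2)\ne 0$. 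The paper sidesteps all of this by counting the order of vanishing of $\det M(x)$ at $x=\lambda/2$, which is the slickest option if you only need the multiplicity inequality.
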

\begin{proof}
We use \cref{lem:pathlen} on $H_1$ and $H_2$, where we recall $H_1$ is $k$ disjoint copies of $K_4$ and $H_2$ is $4$ disjoint copies of $H$, to obtain
$$
\det(A_G- 2x I)=\pm U_{\ell-1}(x)^{e(H_2)}\det\left( A_{H_1}-\left(2x-\frac{3U_{\ell-2}(x)}{U_{\ell - 1}(x)}\right)I+\frac{1}{U_{\ell-1}(x)}A_{H_2}\right).$$
The multiplicity of $\lambda$ in $A_G$ is equal to the multiplicity of $x = \lambda / 2$ as a root of the polynomial $\det(A_G- 2x I)$, which is equal to the multiplicity of $x = \lambda / 2$ as a root of the polynomial
$$
\pm U_{\ell-1}(x)^{e(H_2)}\det\left(A_{H_1}-\left(2x-\frac{3U_{\ell-2}(x)}{U_{\ell - 1}(x)}\right)I+\frac{1}{U_{\ell-1}(x)}A_{H_2}\right).$$
Since $U_{\ell-1}$ has no roots outside of $[-1,1]$, the multiplicity of $\lambda$ in $A_G$ is equal to the multiplicity of $x = \lambda / 2$ as a root of the rational function
\begin{equation}
\label{eq:determinant}
\det\left(A_{H_1}-\left(2x-\frac{3U_{\ell-2}(x)}{U_{\ell - 1}(x)}\right)I+\frac{1}{U_{\ell-1}(x)}A_{H_2}\right).
\end{equation}
We regard the adjacency matrices of $H_1$ and $H_2$ as acting on the vector space $\mathbf{R}^{V}$, where $V$ is the common vertex set of $H_1$ and $H_2$. Note that
$$W = \{\bm v\in \mathbf{R}^{V}: \bm v(i, u) = \bm v(j, u), \forall i,j\in [4], u \in V(H)\}$$
is an invariant subspace of $A_{H_1}$ and $A_{H_2}$, and so is $W^{\perp}$.
Therefore
\begin{align*}
    &\det\left(A_{H_1}-\left(2x-\frac{3U_{\ell-2}(x)}{U_{\ell - 1}(x)}\right)I+\frac{1}{U_{\ell-1}(x)}A_{H_2}\right) \\ =& 
    \det\left(A_{H_1}|_W-\left(2x-\frac{3U_{\ell-2}(x)}{U_{\ell - 1}(x)}\right)I+\frac{1}{U_{\ell-1}(x)}A_{H_2}|_W\right)\cdot\\&\det\left(A_{H_1}|_{W^{\perp}}-\left(2x-\frac{3U_{\ell-2}(x)}{U_{\ell - 1}(x)}\right)I+\frac{1}{U_{\ell-1}(x)}A_{H_2}|_{W^{\perp}}\right).
\end{align*}
Furthermore, for any $\bm u$ in $W^{\perp}$, we have
$$\bm u^T A_{H_1} \bm u = - \bm u^T \bm u,\qquad \bm u^T A_{H_2} \bm u \leq 3 \bm u^T \bm u,$$
where the second property holds because $H_2$ is $3$-regular. Thus, for all $\lambda > 2$ and $\bm u\in W^\perp$, we have
$$\bm u^T\left(A_{H_1} - \left(\lambda - \frac{3U_{\ell - 2}(\lambda / 2)}{U_{\ell - 1}(\lambda / 2)}\right)I + \frac{1}{U_{\ell - 1}(\lambda / 2)} A_{H_2}\right) \bm u \leq \left( -1 - \left(\lambda - \frac{3U_{\ell - 2}(\lambda / 2)}{U_{\ell - 1}(\lambda / 2)}\right) +  \frac{3}{U_{\ell - 1}(\lambda / 2)}\right)\bm u^T \bm u.$$
Since $\lambda > 2$, we can show by induction on $\ell$ that 
$$U_{\ell - 2}(\lambda / 2) + 1 < U_{\ell-1}(\lambda / 2).$$
Indeed, the base case $\ell = 2$ is clear, and for the induction step we have
$$U_{\ell-1}(\lambda / 2) = \lambda U_{\ell - 2}(\lambda / 2) - U_{\ell - 3}(\lambda / 2)$$
which gives
$$U_{\ell-1}(\lambda / 2) - U_{\ell - 2}(\lambda / 2) > U_{\ell - 2}(\lambda / 2) - U_{\ell - 3}(\lambda / 2).$$
So, we have
$$-1 - \left(\lambda - \frac{3U_{\ell - 2}(\lambda / 2)}{U_{\ell - 1}(\lambda / 2)}\right) +  \frac{3}{U_{\ell - 1}(\lambda / 2)} = \frac{3(U_{\ell - 2}(\lambda / 2) + 1)}{U_{\ell-1}(\lambda / 2)} - \lambda - 1 < 0,$$
and thus for any $\bm u$ in $W^{\perp}$,
$$\bm u^T\left(A_{H_1} - \left(\lambda - \frac{3U_{\ell - 2}(\lambda / 2)}{U_{\ell - 1}(\lambda / 2)}\right)I + \frac{1}{U_{\ell - 1}(\lambda / 2)} A_{H_2}\right) \bm u < 0 .$$
Therefore, for $x = \lambda / 2$ we have
$$\det\left(A_{H_1}|_{W^{\perp}}-\left(2x-\frac{3U_{\ell-2}(x)}{U_{\ell - 1}(x)}\right)I+\frac{1}{U_{\ell-1}(x)}A_{H_2}|_{W^{\perp}}\right) \neq 0.$$
So the multiplicity of $\lambda$ in $A_G$ is equal to the multiplicity of the root $x = \lambda / 2$ in the rational function
\begin{equation*}
\det\left(A_{H_1}|_W-\left(2x-\frac{3U_{\ell-2}(x)}{U_{\ell - 1}(x)}\right)I+\frac{1}{U_{\ell-1}(x)}A_{H_2}|_W\right).
\end{equation*}
Simplifying, we obtain
\begin{align*}
   \label{eq:determinant2}
\MoveEqLeft
A_{H_1}|_W-\left(2x-\frac{3U_{\ell-2}(x)}{U_{\ell - 1}(x)}\right)I+\frac{1}{U_{\ell-1}(x)}A_{H_2}|_W
\\
&= 3I -\left(2x-\frac{3U_{\ell-2}(x)}{U_{\ell - 1}(x)}\right)I+\frac{1}{U_{\ell-1}(x)}A_H \\
&= \frac{1}{U_{\ell - 1}(x)}(A_H - f(2x) I).  
\end{align*}
Thus the multiplicity of $\lambda$ in $A_G$ is equal to the multiplicity of $x = \lambda / 2$ as a root of $\det(A_H - f(2x)I)$, which is equal to the multiplicity of $f(\lambda)$ in $A_H$ multiplied by the multiplicity of $x = \lambda / 2$ as a root of $f(2x) - f(\lambda)$. In particular, $\lambda$ is an eigenvalue of $A_G$ if and only if $f(\lambda)$ is an eigenvalue of $A_H$, and the multiplicity of $\lambda$ in $A_G$ is at least the multiplicity of $f(\lambda)$ in $A_H$. 
\end{proof}

\begin{proof}[Proof of \cref{thm:second}]
Condition (1) follows directly from the construction of $G = G(H, \ell)$. We now prove (2) and (3). We take $C_1 = 0.001$ and $\alpha_0 = \frac{3 + \sqrt{17}}{2}$. By \cref{lem:f}, $\lambda > 2$ is an eigenvalue of $A_G$ if and only if $f(\lambda)$ is an eigenvalue of $A_H$, and the multiplicity of $f(\lambda)$ is at least that of $\lambda$. We now give the following lemmas on the behavior of $f$. These lemmas roughly assert that, in the range of interest, $f$ behaves sufficiently like a linear function. Both lemmas assume $\ell > 10$.
\begin{restatable}{lemma}{fbashfirst}
\label{lem:f-bash-1}
 We have $f(3) < 0$, $\lim_{\lambda\to\infty} f(\lambda) = \infty$ and if $\lambda > 3$ satisfies $f(\lambda) \geq 0$, then $f'(\lambda) > 0$.

Thus, there exists a unique $\lambda_* > 3$ with $f(\lambda_*) = 0$, and $f$ is increasing on $[\lambda_*, \infty)$.
\end{restatable}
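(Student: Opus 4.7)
The approach is to change variables to $\lambda/2 = \cosh\theta$ with $\theta > 0$ (equivalently, $\alpha = e^\theta > 1$), and exploit \cref{eq:U-alpha} in the form $U_{m-1}(\cosh\theta) = \sinh(m\theta)/\sinh\theta$. Under this substitution the ratio $U_{\ell-2}(\lambda/2)/U_{\ell-1}(\lambda/2)$ becomes $\sinh((\ell-1)\theta)/\sinh(\ell\theta)$, a form in which elementary hyperbolic identities apply cleanly.

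The two ``bookend'' assertions are easy. Substituting $\lambda = 3$ directly into the definition of $f$ gives $f(3) = -3U_{\ell-2}(3/2)$, which is negative since $U_m(x) > 0$ for $x > 1$ (immediate from \cref{eq:U-alpha}). For $\lambda \to \infty$ one has $\alpha \to \infty$, so $U_{\ell-2}(\lambda/2)/U_{\ell-1}(\lambda/2) \to 0$, while $\lambda - 3 \to \infty$ and $U_{\ell-1}(\lambda/2) \to \infty$; hence $f(\lambda) \to \infty$.

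The main step is the positive-derivative claim. I would factor $f = g \cdot U_{\ell-1}(\cdot/2)$ with $g(\lambda) := \lambda - 3 - 3U_{\ell-2}(\lambda/2)/U_{\ell-1}(\lambda/2)$, so that
\[
f'(\lambda) = g'(\lambda)\, U_{\ell-1}(\lambda/2) + \tfrac{1}{2}\,g(\lambda)\, U'_{\ell-1}(\lambda/2).
\]
For $\lambda > 3$, both $U_{\ell-1}(\lambda/2)$ and $U'_{\ell-1}(\lambda/2)$ are positive (since $U_m$ has all roots in $(-1,1)$ with positive leading coefficient); and if $f(\lambda) \geq 0$ then $g(\lambda) \geq 0$. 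So it suffices to show $g'(\lambda) > 0$, i.e., that $U_{\ell-2}/U_{\ell-1}$ is strictly decreasing for $x > 1$. In $\theta$-coordinates, the quotient rule combined with $\sinh(a)\cosh(b) - \cosh(a)\sinh(b) = \sinh(a-b)$ reduces this to the inequality
\[
\cosh(\ell\theta)\sinh((\ell-1)\theta) > (\ell-1)\sinh\theta \quad \text{for } \theta > 0,
\]
which follows with room to spare from $\cosh(\ell\theta) > 1$ together with $\sinh((\ell-1)\theta) \geq (\ell-1)\sinh\theta$ (itself the elementary fact $\sinh(n\theta)/\sinh\theta = U_{n-1}(\cosh\theta) \geq U_{n-1}(1) = n$). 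This hyperbolic monotonicity is the main technical step, though routine.

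For the ``thus'' conclusion, the intermediate value theorem produces some $\lambda_* > 3$ with $f(\lambda_*) = 0$, and the third assertion gives $f'(\lambda_*) > 0$, so $f > 0$ just above $\lambda_*$. I claim $f$ cannot return to zero: if $S := \{\lambda > \lambda_* : f(\lambda) \le 0\}$ were nonempty with infimum $\lambda_0$, then continuity and $f \geq 0$ on $(\lambda_*, \lambda_0)$ would force $f(\lambda_0) = 0$, contradicting the derivative-positivity assertion at $\lambda_0$. Hence $\lambda_*$ is unique, $f \geq 0$ on $[\lambda_*, \infty)$, and the derivative assertion then yields $f' > 0$ throughout that interval, giving strict monotonicity.
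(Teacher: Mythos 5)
Your proposal is correct and follows essentially the same strategy as the paper: factor $f$ as the product $g(\lambda)\,U_{\ell-1}(\lambda/2)$, show $g$ is increasing (equivalently, that $U_{\ell-2}/U_{\ell-1}$ is decreasing), note $U_{\ell-1}>0$ and $U_{\ell-1}'>0$ on $(1,\infty)$, and conclude by the product rule whenever $g\ge0$. The only notable difference is how the monotonicity of $U_{\ell-2}/U_{\ell-1}$ is established: you push it through the hyperbolic substitution $\lambda/2=\cosh\theta$ and the addition formula, whereas the paper uses the continued-fraction recurrence $\frac{U_m}{U_{m+1}}=\frac{1}{\lambda-U_{m-1}/U_m}$ and a short induction on $\ell$; both are routine and give the same conclusion, with the paper's argument being slightly slicker and yours slightly more explicit.
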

\begin{restatable}{lemma}{fbashsecond}
\label{lem:f-bash-2}
For any $\lambda \in [\lambda_*, \infty)$, if we let $\alpha > 2$ satisfy $\lambda = \alpha + \alpha^{-1}$, then
\begin{enumerate}
    \item if $\alpha > \alpha_0 = \frac{3 + \sqrt{17}}{2}$, then we have
$$f(\lambda) \geq (\alpha - \alpha_0)\alpha_0^{\ell - 1}.$$
    \item If $f(\lambda) < 5$, then we have 
    $$0.01\alpha_0^{\ell} < f'(\lambda) < 3\alpha_0^{\ell}.$$
\end{enumerate}
\end{restatable}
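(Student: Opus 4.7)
The plan is to reparameterize $\lambda = \alpha + \alpha^{-1}$ with $\alpha > 2$ and use the closed form \cref{eq:U-alpha} to rewrite
\[
(\alpha - \alpha^{-1})\,f(\lambda) = \alpha^{\ell-1}(\alpha^2 - 3\alpha - 2) + \alpha^{-\ell-1}(2\alpha^2 + 3\alpha - 1).
\]
The central algebraic observation is that $\alpha_0 = (3+\sqrt{17})/2$ is precisely the larger root of $\alpha^2 - 3\alpha - 2$, so this leading factor splits cleanly as $(\alpha - \alpha_0)(\alpha + 2/\alpha_0)$. Essentially all of the lemma's content flows from this factorization.

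For part (1), the plan is to drop the positive second summand (which is valid for $\alpha > \alpha_0$, since $2\alpha^2 + 3\alpha - 1 > 0$), yielding
\[
f(\lambda)(\alpha - \alpha^{-1}) \geq \alpha^{\ell-1}(\alpha - \alpha_0)(\alpha + 2/\alpha_0).
\]
Dividing by the positive factor $\alpha - \alpha_0$, the target bound $f(\lambda) \geq (\alpha - \alpha_0)\alpha_0^{\ell-1}$ reduces to $\alpha^{\ell-1}(\alpha + 2/\alpha_0) \geq \alpha_0^{\ell-1}(\alpha - \alpha^{-1})$, which is immediate from $\alpha \geq \alpha_0$ and the trivial $2/\alpha_0 + \alpha^{-1} > 0$.

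For part (2), I would first use part (1) (together with a symmetric analysis for $\alpha < \alpha_0$) to show that the hypothesis $f(\lambda) < 5$ traps $\alpha$ in a narrow window around $\alpha_0$. Concretely, if $\alpha \geq \alpha_0$, part (1) gives $\alpha - \alpha_0 < 5\alpha_0^{-(\ell-1)}$; if $\alpha \in [\alpha_*, \alpha_0]$, solving the defining equation $f(\lambda_*) = 0$ via the same factorization pins $\alpha_0 - \alpha_* = O(\alpha_0^{-2\ell})$, even smaller. Thus $|\alpha - \alpha_0| = O(\alpha_0^{-\ell})$ uniformly. Setting $g(\alpha) = (\alpha - \alpha^{-1}) f(\lambda)$ and applying the chain rule yields
\[
f'(\lambda) = \frac{g'(\alpha)\,\alpha^3}{(\alpha^2 - 1)^2} - \frac{g(\alpha)\,(\alpha^2+1)\,\alpha^2}{(\alpha^2 - 1)^3},
\]
in which the hypothesis $f(\lambda) < 5$ forces $g(\alpha) = O(1)$, so the second term is $O(1)$. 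Differentiating the closed form for $g$ directly gives
\[
g'(\alpha) = \alpha^{\ell-2}\bigl((\ell+1)\alpha^2 - 3\ell\alpha - 2(\ell-1)\bigr) + O(\ell\,\alpha^{-\ell}),
\]
and the identity $\alpha_0^2 = 3\alpha_0 + 2$ collapses the bracket at $\alpha_0$ neatly to $3\alpha_0 + 4$. Combined with the window bound $|\alpha - \alpha_0| = O(\alpha_0^{-\ell})$, Taylor expansion gives $g'(\alpha) = \alpha_0^{\ell-2}(3\alpha_0 + 4) + O(\ell^2)$, and so
\[
f'(\lambda) = \frac{(3\alpha_0 + 4)\,\alpha_0^{\ell+1}}{(\alpha_0^2 - 1)^2} + o(\alpha_0^\ell).
\]
Using $\alpha_0^2 - 1 = 3\alpha_0 + 1$, the leading coefficient simplifies to $(13\alpha_0 + 6)/(33\alpha_0 + 19) \approx 0.38$, which sits comfortably inside $(0.01, 3)$.

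The only genuinely delicate step is the explicit error tracking in part (2): one must verify that the lower-order contributions (of orders $\alpha_0^{-\ell}$, $\ell$, and $\ell^2$, arising from Taylor expansion of $g'$ around $\alpha_0$ and from the second term of the chain-rule formula) cannot push $f'(\lambda)$ outside the target window $(0.01, 3) \cdot \alpha_0^\ell$. The hypothesis $\ell > 10$ in the lemma is exactly what provides the numerical slack between $0.38$ and the endpoints $0.01, 3$ to absorb these errors.
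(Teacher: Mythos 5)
Your proof is correct in its main ideas, and the route you take through part (2) is organizationally different from the paper's, so a comparison is worthwhile.

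For part (1), your argument is essentially the paper's computation written more structurally: you clear the denominator to get the Laurent polynomial
$$(\alpha-\alpha^{-1})f(\lambda)=\alpha^{\ell-1}(\alpha^2-3\alpha-2)+\alpha^{-\ell-1}(2\alpha^2+3\alpha-1)$$
and factor $\alpha^2-3\alpha-2=(\alpha-\alpha_0)(\alpha+2/\alpha_0)$. The paper stops short of the full factorization and instead uses the identity $\alpha_0=2\alpha_0^{-1}+3$ together with $(\alpha^\ell-\alpha^{-\ell})/(\alpha-\alpha^{-1})\ge\alpha_0^{\ell-1}$; the two manipulations are equivalent.

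For part (2), you diverge more substantially. The paper writes $f=A\cdot B$ with $A(\alpha)=\alpha-2\alpha^{-1}-3+3(\alpha^{-\ell+1}-\alpha^{-\ell-1})/(\alpha^\ell-\alpha^{-\ell})$ and $B(\alpha)=(\alpha^\ell-\alpha^{-\ell})/(\alpha-\alpha^{-1})$, applies the product rule, and bounds each of $A$, $A'$, $B$, $B'$ separately; the final bounds $0.1\alpha_0^\ell<f'(\lambda)<3\alpha_0^\ell$ follow by multiplying explicit estimates without ever identifying the true growth rate. You instead differentiate the explicit Laurent polynomial $g(\alpha)=(\alpha-\alpha^{-1})f(\lambda)$ via the quotient rule, localize $\alpha$ to a window of width $O(\alpha_0^{-\ell})$ around $\alpha_0$ (exactly as the paper localizes), and Taylor-expand $g'$ about $\alpha_0$. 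The payoff is that the identity $\alpha_0^2=3\alpha_0+2$ collapses the $\ell$-leading terms to a clean constant $3\alpha_0+4$, yielding the precise asymptotic $f'(\lambda)=\tfrac{13\alpha_0+6}{33\alpha_0+19}\alpha_0^\ell+O(\ell)\approx 0.38\,\alpha_0^\ell$. This is strictly more informative than the paper's termwise bounds, at the cost that you must still track the error constants explicitly to confirm the window $(0.01,3)\cdot\alpha_0^\ell$ for $\ell>10$; you correctly flag this as the remaining work. (One minor note: your error term is actually $O(\ell)$ rather than $O(\ell^2)$, because the $\ell^2$ coefficient in $g''$ also cancels against $\alpha_0^2-3\alpha_0-2=0$ --- but stating $O(\ell^2)$ is harmless for $\ell>10$.) Both approaches work; the paper's is slightly more hands-on but leaves no numerical loose ends, while yours is cleaner structurally and reveals the exact constant.
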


We defer the proofs of these lemmas to \cref{sec:approx2}.

By \cref{lem:f-bash-1}, $\lambda_1$ and $\lambda_2$, the first and second eigenvalue of $A_G$, are both greater than $\lambda_*$, and $f(\lambda_1)$ and $f(\lambda_2)$ are the first and the second eigenvalue of $A_H$ respectively. By the condition imposed on the spectral gap of $H$, we get
$$f(\lambda_1) - f(\lambda_2) \geq 0.01.$$
Also note that $f(\lambda_1)$ is equal to the top eigenvalue $3$ of $A_H$. By \cref{lem:f-bash-2}, for any $\lambda \in [\lambda_2, \lambda_1]$ we have $f'(\lambda) < 3 \alpha_0^{\ell}$. This lets us conclude (2) of \cref{thm:second} as 
$$\kappa = \lambda_1 - \lambda_2 \geq \frac{f(\lambda_1) - f(\lambda_2)}{3\alpha_0^{\ell}} \geq 0.001\alpha_0^{-\ell}.$$

We next demonstrate the eigenvalue distribution result. Let $\mu_2, \mu_3, \dots, \mu_{k}$ be the eigenvalues of $H$ (with multiplicity) in the interval $[(1 - \epsilon)\mu_2, \mu_2]$. By our assumption on $H$, we have $k-1 \geq a(\epsilon)n$. As $f(\lambda_*) = 0$ and $f$ is increasing on $[\lambda_*, \infty)$, for each $2 \leq i \leq k$ there exists some $\lambda_i \in [\lambda_*, \infty)$ such that $f(\lambda_i) = \mu_i$. By \cref{lem:f}, the eigenvalues of $G$ contain $\lambda_i$ (including multiplicity). Furthermore we note
$$f(\lambda_2) - f(\lambda_i) < \epsilon\mu_2.$$
By \cref{lem:f-bash-2}, for any $\lambda \in [\lambda_*, \lambda_1]$, we have $f'(\lambda) > 0.01 \alpha_0^{\ell}$.
Therefore we obtain
$$f(\lambda_2) - f(\lambda_i) > 0.01 \alpha_0^{\ell}(\lambda_2 - \lambda_i).$$
So we conclude that
$$\lambda_2 - \lambda_i < 100\epsilon \mu_2 \alpha_0^{-\ell}<300\epsilon \alpha_0^{-\ell}\lambda_2,$$
where we have used $\mu_2<3$ and $\lambda_2>2$. Therefore the $\lambda_i$'s are $k - 1$ distinct eigenvalues of $G$ in the interval
$$\left[(1 - 300\epsilon \alpha_0^{-\ell})\lambda_2, \lambda_2\right]$$
which shows (3) of \cref{thm:second}.
\end{proof}

\section*{Acknowledgments}
We thank Alex Lubotzky and Naser Talebizadeh Sardari for some clarifications regarding the constructions of Ramanujan graphs.


\appendix
\section{Proof of \cref{thm:JYTZZ-approx}}
\label{app:proof}
In this appendix, we explain how to modify the argument in \cite{JTYZZ1} to show \cref{thm:JYTZZ-approx}.

Let $G$ be any connected graph with $n$ vertices, maximum degree $\Delta$ and second eigenvalue $\lambda_2 = \lambda_2(G)$. 
Recall the argument of \cite[Section 4]{JTYZZ1}: choose some constant $c > 0$ depending only $\Delta$, and define parameters $r_1 = \floor{c\log\log n}$ and $r_2 = \floor{c\log n}$. First, remove a set $V_0\cup U$ of at most $O_{\Delta}(n / \log\log n)$ vertices, including an $r_1$-net $V_0$ plus a set $U$ of at most $n^{1 / 2}$ other vertices. Then, \cite[Section 4]{JTYZZ1} shows that the resulting graph $H$ satisfies
$$\sum_{i = 1}^{\abs{H}} \lambda_i(H)^{2r_2} \leq (\lambda_2^{2r_1} - 1)^{r_2 / r_1} n.$$
In \cite{JTYZZ1}, the above is used to directly conclude that
$$m_H(\lambda_2)  \leq \lambda_2^{-2r_2}\sum_{i = 1}^{\abs{H}} \lambda_i(H)^{2r_2} \leq (1 - \lambda_2^{-2r_1})^{r_2 / r_1} n \leq e^{-\sqrt{\log n}}n.$$
Here, we take a slightly broader view of this equation. For any constant $K > 0$, we observe that
$$m_H \sqb{\paren{1 - \frac{K}{\log n}} \lambda_2, \lambda_2} \leq \paren{\paren{1 - \frac{K}{\log n}}\lambda_2}^{-2r_2}\sum_{i = 1}^{\abs{H}} \lambda_i(H)^{2r_2}.$$
Note that, for all sufficiently large $n$, we have
$$\paren{1 - \frac{K}{\log n}}^{-2r_2} \leq \paren{1 + 2\frac{K}{\log n}}^{2r_2} \leq \exp\paren{2\frac{K}{\log n} \cdot 2r_2} \leq \exp(4Kc).$$
Therefore we conclude that
$$m_H \sqb{\paren{1 - \frac{K}{\log n}} \lambda_2, \lambda_2}  \leq e^{4Kc}(1 - \lambda_2^{-2r_1})^{r_2 / r_1} n \leq e^{4Kc} e^{-\sqrt{\log n}}n = O_{\Delta, K}\paren{\frac{n}{\log \log n}}.$$
Finally, by the Cauchy eigenvalue interlacing theorem, since $H$ is obtained from $G$ by removing a set $V_0\cup U$ of at most $O_{\Delta}(n / \log\log n)$ vertices, we have
$$m_G\sqb{\paren{1 - \frac{K}{\log n}} \lambda_2, \lambda_2}\leq m_H\sqb{\paren{1 - \frac{K}{\log n}} \lambda_2, \lambda_2} + \abs{V_0\cup U} = O_{\Delta, K}\paren{\frac{n}{\log \log n}}.$$
This shows \cref{thm:JYTZZ-approx}.

\section{Proof of \cref{lem:f-bash-1} and \cref{lem:f-bash-2}}
\label{sec:approx2}
Let $\ell > 10$ be an integer. Recall that in \cref{sec:approx1}, we defined the function
$$f(\lambda) = \left(\lambda - \frac{3U_{\ell - 2}(\lambda / 2) }{U_{\ell - 1}(\lambda / 2)} - 3\right)U_{\ell - 1}(\lambda / 2)$$
and made the following claims of its behavior.
\fbashfirst*
\fbashsecond*
Here we prove these estimates.
\begin{proof}[Proof of \cref{lem:f-bash-1}]
First,
$$f(3) = -\frac{3U_{\ell - 2}(3 / 2) }{U_{\ell - 1}(3/ 2)}U_{\ell - 1}(3 / 2) < 0.$$
Now, we show that both factors in the definition of $f$ are increasing in $\lambda$ for $\lambda>2$. For the first, it suffices to show that
$$\frac{U_{\ell-2}(\lambda/2)}{U_{\ell-1}(\lambda/2)}$$
is decreasing in $\lambda$, which holds by induction on $\ell$ with a base case of $\ell=2$ and the recurrence
$$\frac{U_m(\lambda/2)}{U_{m+1}(\lambda/2)}=\frac{1}{\lambda-\frac{U_{m-1}(\lambda/2)}{U_m(\lambda/2)}}$$
derived from \cref{eq:U-recur} (along with the observation that $U_m$ has no zeros outside $[-1,1]$ for any $m$). For the second, we have writing $\lambda=\alpha+1/\alpha$ that
$$U_{\ell-1}(\lambda / 2)=\frac{\alpha^\ell-\alpha^{-\ell}}{\alpha-\alpha^{-1}}=\sum_{\substack{1-\ell\leq i\leq \ell-1\\i\equiv \ell-1\pmod 2}}\alpha^i.$$
Pairing the terms corresponding to $i$ and $-i$ shows that the second factor is increasing in $\alpha$ and thus $\lambda$. Letting $\lambda^*>3$ be the sole root of the first factor of $f$ in $[3,\infty)$ finishes the proof.
\end{proof}
\begin{proof}[Proof of \cref{lem:f-bash-2}]
The first claim is due to the relation
\begin{align*}
f(\lambda) &= \left(\lambda - 3 - \frac{3(\alpha^{\ell - 1} - \alpha^{-(\ell - 1)})}{\alpha^{\ell} - \alpha^{-\ell}}\right)\frac{\alpha^{\ell} - \alpha^{-\ell}}{\alpha - \alpha^{-1}} \\
&= \left(\alpha + \alpha^{-1} - 3 - \frac{3(\alpha^{\ell - 1} - \alpha^{-(\ell - 1)})}{\alpha^{\ell} - \alpha^{-\ell}}\right)\frac{\alpha^{\ell} - \alpha^{-\ell}}{\alpha - \alpha^{-1}} \\
&= \left(\alpha - 2\alpha^{-1} - 3 - 3\left(\frac{(\alpha^{\ell - 1} - \alpha^{-(\ell - 1)})}{\alpha^{\ell} - \alpha^{-\ell}} - \alpha^{-1}\right)\right)\frac{\alpha^{\ell} - \alpha^{-\ell}}{\alpha - \alpha^{-1}} \\
&= \left(\alpha - 2\alpha^{-1} - 3 + \frac{3(\alpha^{-\ell+1}-\alpha^{-\ell-1})}{\alpha^{\ell} - \alpha^{-\ell}}\right)\frac{\alpha^{\ell} - \alpha^{-\ell}}{\alpha - \alpha^{-1}}.
\end{align*}
We note that $\alpha_0 = 2\alpha_0^{-1} + 3$ and
$$\frac{\alpha^{\ell} - \alpha^{-\ell}}{\alpha - \alpha^{-1}} \geq \alpha^{\ell - 1} \geq \alpha_0^{\ell - 1}.$$
Therefore
$$f(\lambda) \geq (\alpha - 2\alpha^{-1} - 3) \alpha_0^{\ell - 1} \geq (\alpha - 2\alpha_0^{-1} - 3) \alpha_0^{\ell - 1} = (\alpha - \alpha_0)\alpha_0^{\ell - 1}.$$
The second claim is proven by the product rule for derivatives. First we show the upper bound. As $f(\lambda) < 5$, the first claim gives
$$\alpha - \alpha_0 < 5 \alpha_0^{-\ell + 1}.$$
Now note that by the product rule, we have
\begin{align*}
 f'(\lambda) = \frac{d \alpha}{d\lambda} \left(A'(\alpha) B(\alpha) + A(\alpha)B'(\alpha)\right)
\end{align*}
where
$$A(\alpha) = \alpha - 2\alpha^{-1} - 3 + \frac{3(\alpha^{-\ell+1}-\alpha^{-\ell-1})}{\alpha^{\ell} - \alpha^{-\ell}},\qquad B(\alpha) = \frac{\alpha^{\ell} - \alpha^{-\ell}}{\alpha - \alpha^{-1}}.$$
We first note that if $\alpha \geq \alpha_0$, then
$$\alpha - 2\alpha^{-1} - 3 = \alpha - 2\alpha^{-1} - (\alpha_0 - 2\alpha_0^{-1}) = (\alpha - \alpha_0)\left(1 + \frac{2}{\alpha \alpha_0}\right) \leq (\alpha - \alpha_0)\left(1 + \frac{2}{ \alpha_0^2}\right) < 2(\alpha - \alpha_0).$$
while if $\alpha < \alpha_0$ then $\alpha - 2\alpha^{-1} - 3  \leq 0$. As $\alpha - \alpha_0 < 5 \alpha_0^{-\ell + 1}$, we have in both cases that
$$\alpha - 2\alpha^{-1} - 3 \leq 10 \alpha_0^{-\ell + 1}.$$
As $\ell > 10$, we can estimate $\alpha - \alpha_0 < 5 \alpha_0^{-\ell + 2} < 0.1\ell^{-1}$.
Therefore we have
$$\alpha^{\ell} < (\alpha_0 + 0.1\ell^{-1})^{\ell} <\alpha_0^{\ell}(1 + 0.05\ell^{-1})^{\ell} < e^{0.05}\alpha_0^{\ell} < 1.1\alpha_0^{\ell}.$$
This implies
$$\alpha - 2\alpha^{-1} - 3 \leq 10(\alpha_0^\ell)^{(-\ell+1)/\ell} \leq 10\cdot 1.1^{(-\ell+1)/\ell}\alpha^{-\ell+1}= 11 \alpha^{-\ell + 1}.$$
Furthermore we have
$$\frac{3(\alpha^{-\ell+1}-\alpha^{-\ell-1})}{\alpha^{\ell} - \alpha^{-\ell}} \leq \frac{3\alpha^{-\ell+1}}{3} = \alpha^{-\ell + 1}.$$
Therefore we conclude that
$$A(\alpha) = \alpha - 2\alpha^{-1} - 3 + 3\frac{\alpha^{-\ell+1}-\alpha^{-\ell-1}}{\alpha^{\ell} - \alpha^{-\ell}} < 12\alpha^{-\ell + 1}.$$
We then note that
$$B(\alpha) = \sum_{i = 0}^{\ell - 1} \alpha^{\ell - 1 - 2i}.$$
By directly taking the derivative we observe that
$$B'(\alpha) = \sum_{i = 0}^{\ell - 1} (\ell - 1 - 2i)\alpha^{\ell - 2 - 2i} < \ell \sum_{i = 0}^{\ell - 1} \alpha^{\ell - 2 - 2i} <\ell \frac{\alpha^\ell}{\alpha^2-1}< 2\ell \alpha^{\ell - 1}.$$
We multiply the above estimates to obtain
$$A(\alpha)B'(\alpha) < 24\ell.$$
On the other hand, we see that the last summand of $A$,
$$\frac{3(\alpha^{-\ell+1}-\alpha^{-\ell-1})}{\alpha^{\ell} - \alpha^{-\ell}} = 3\alpha^{-\ell}U_{\ell - 1}(\lambda)^{-1},$$
is decreasing in $\alpha$, so we have
$$A'(\alpha) < (\alpha - 2\alpha^{-1} - 3)' = 1 + 2\alpha^{-2} < 1.5$$
and the direct estimate, as $\alpha > 2$,
$$B(\alpha) < \frac{\alpha^{\ell}}{\alpha - \alpha^{-1}} < \frac{2}{3}\alpha^{\ell}.$$ 
Therefore we conclude that $B(\alpha) < 0.8\alpha_0^{\ell}$, and thus
$$A'(\alpha)B(\alpha) < 1.2\alpha_0^{\ell}.$$
Summing and using the assumption that $\ell > 10$, we have
$$A'(\alpha) B(\alpha) + A(\alpha)B'(\alpha) < 24\ell + 1.2\alpha_0^{\ell} < 1.5\alpha_0^{\ell}.$$
Finally, note that as $\alpha > 2$, we have $\frac{d \alpha}{d\lambda} = \frac{1}{1 - \alpha^{-2}} < 2$, so we conclude that $f'(\lambda) < 3\alpha_0^{\ell}$ as desired.

The proof of the lower bound is similar to the above, except that the inequalities must be reversed. As $f(\lambda) > 0$, we have 
$$\alpha - 2\alpha^{-1} - 3 + 3\frac{\alpha^{-\ell+1}-\alpha^{-\ell-1}}{\alpha^{\ell} - \alpha^{-\ell}} > 0.$$
As $\alpha > 2$, the above implies
$$\alpha - 2\alpha^{-1} - 3 + 6\cdot 2^{-2\ell + 1} > 0.$$
If $\alpha < \alpha_0$ we have
$$\alpha - 2\alpha^{-1} - 3 < \alpha - 2\alpha_0^{-1} - 3 = \alpha - \alpha_0,$$
which implies $\alpha - \alpha_0 > - 6\cdot 2^{-2\ell + 1}$. If $\alpha\ge\alpha_0$, the above inequality also holds.

We now use the same formulas as before. Recall
\begin{align*}
 f'(\lambda) = \frac{d \alpha}{d\lambda} \left(A'(\alpha) B(\alpha) + A(\alpha)B'(\alpha)\right).
\end{align*}
Since we have $A(\alpha) > 0$ and $B'(\alpha) > 0$, we can estimate
\begin{align*}
 f'(\lambda) > \frac{d \alpha}{d\lambda} A'(\alpha) B(\alpha).
\end{align*}
We now have
$$A'(\alpha) = 1 + 2\alpha^{-2} + 3\left(\frac{\alpha^{-\ell+1}-\alpha^{-\ell-1}}{\alpha^{\ell} - \alpha^{-\ell}}\right)'.$$
Recalling $\ell > 10$ and $\alpha > 2$, we can loosely estimate the last term:
\begin{align*}
 \left(\frac{\alpha^{-\ell+1}-\alpha^{-\ell-1}}{\alpha^{\ell} - \alpha^{-\ell}}\right)'=& \frac{\left((-\ell + 1)\alpha^{-\ell+1} - (-\ell - 1)\alpha^{-\ell+1}\right)(\alpha^{\ell} - \alpha^{-\ell}) - \ell (\alpha^{-\ell+1}-\alpha^{-\ell-1})(\alpha^{\ell} + \alpha^{-\ell})}{\alpha(\alpha^{\ell} - \alpha^{-\ell})^2}  \\
 \geq& \frac{(-\ell + 1)\alpha^{-\ell+1}(\alpha^{\ell} - \alpha^{-\ell}) - \ell \alpha^{-\ell+1}(\alpha^{\ell} + \alpha^{-\ell})}{(\alpha^{\ell} - \alpha^{-\ell})^2} \cdot \alpha^{-1} \\
 \geq& \frac{-4\ell\alpha}{(\alpha^{\ell} - \alpha^{-\ell})^2} \cdot \alpha^{-1}  > -2\alpha^{-2}.
\end{align*}
Thus we conclude that $A'(\alpha) > 1$. On the other hand, we have
$$B(\alpha) = \sum_{i = 0}^{\ell - 1} \alpha^{\ell - 1 - 2i} > \alpha^{\ell - 1}.$$
Recall that $\alpha > \alpha_0 - 6\cdot 2^{-2\ell + 1}$, which implies $\alpha^{\ell - 1} > 0.5\alpha_0^{\ell - 1}$. We conclude that
$$B(\alpha) > 0.5\alpha_0^{\ell - 1} > 0.1 \alpha_0^{\ell}.$$
Finally, we have $\frac{d \alpha}{d\lambda} = \frac{1}{1 - \alpha^{-2}} > 1$. Multiplying the estimates, we conclude that $f'(\lambda) > 0.1\alpha_0^{\ell}$ as desired.
\end{proof}

\end{document}